\newcommand{\bna}{\begin{eqnarray}}
\newcommand{\ena}{\end{eqnarray}}
\newcommand{\ba}{\begin{eqnarray*}}
\newcommand{\ea}{\end{eqnarray*}}
\newcommand{\bs}[1]{}
\newtheorem{theorem}{Theorem}[section]
\newtheorem{corollary}[theorem]{Corollary}
\newtheorem{lemma}[theorem]{Lemma}
\newtheorem{proposition}[theorem]{Proposition}
\newtheorem{remark}[theorem]{Remark}
\newtheorem{definition}[theorem]{Definition}
\numberwithin{equation}{section}
\numberwithin{figure}{section}
\def\p{{\bf p}}
\def\q{{\bf q}}
\def\p{{\bf p}}
\def\pn{{\bf p =(p_1, \dots, p_n) }}
\def\q{{\bf q}}
\def\vv{{\bf v}}
\def\x{{\bf x}}
\def\0{{\bf 0}}
\def\int{{\text{int}}}
\def\R{\mathbb R}
\def\V{\mathcal V}
\def\M{\mathcal M}
\def\RP{\mathbb R \mathbb P }
\begin{document}
\title{Universal Rigidity of Complete Bipartite Graphs}
\renewcommand\footnotemark{}
\author{Robert Connelly and  
Steven J. Gortler 
\thanks{This work was partially supported by NSF grants  DMS-1564493 and DMS-1564473.}}

\maketitle 

\begin{abstract}  
We describe a very simple condition that is necessary
for the universal rigidity of a complete bipartite framework 
$(K(n,m),\p,\q)$.
This condition is also
sufficient for universal rigidity under a variety of weak assumptions,
such as general position.  Even without any of these assumptions, in
complete generality, we extend these ideas to obtain an efficient
algorithm, based on a sequence of linear programs, that determines
whether an input framework of a complete bipartite graph
is universally rigid or not.

{\bf Keywords: } rigidity, prestress stability, universal rigidity

\end{abstract}
\section{Introduction and definitions} \label{section:introduction}

\subsection{Main Results}

A bar and joint 
framework, denoted as $(G,\p)$, is a graph $G$ together with a
configuration $\p=(\p_1, \dots,
\p_N)$ of points in $\R^d$.
A bar and joint framework is \emph{universally rigid} if it is rigid
in any Euclidean space that contains it.  This is equivalent to the
property that 
the framework must be 
congruent to any other configuration of the vertices of the underlying
graph, in any dimension, 
whenever the corresponding edge lengths are the same.  

In this paper, we provide a complete characterization of which
realizations of a complete bipartite graph, $G=K(n,m)$, are
universally rigid and which realizations are not.  As a necessary condition,
we show (Theorem  \ref{thm:main}) that,
except for $K(1,0)$ (a single vertex) and 
$K(1,1)$, if the partitions can be strictly separated by a quadric
surface, then the framework is not universally rigid.  Conversely,
as a sufficient condition, we show
(Corollary \ref{cor:general-position}) that, if the vertices of the
configuration are in general (affine) position in $\R^d$ and there is no
quadric surface strictly separating the partitions, then the framework
is universally rigid.  
Alternatively
 (Corollary
\ref{cor:quadric-general-position}) 
if there are at least
$(d+1)(d+2)/2+1$ vertices and no $(d+1)(d+2)/2$ of them
lie in a quadric surface and if the partitions cannot be strictly
separated by a quadric surface, then the framework is universally rigid.

Even without any of these general position assumptions, in complete
generality, we extend these ideas to obtain an efficient algorithm,
based on a sequence of linear programs, that determines whether an
input framework of a complete bipartite graph is universally rigid or
not.

Surprisingly, 
our results are closely related  to some older statements about
extremal correlation matrices due to 
Tsirelson, which arose in his study of quantum Bell-type 
inequalities. In particular, our  Theorem~\ref{thm:main} 
is related to Tsirelson's Theorem 2.21(c)
in~\cite{ts},
while our Corollary
\ref{cor:quadric-general-position} is related to
Tsirelson's
Theorem 2.22 in~\cite{ts}. Proofs for these 
statements were not
provided in~\cite{ts}.
This connection was pointed out in 
recent work~\cite{GLL} 
which also showed that one can indeed prove Tsirelson's statements
using our results and techniques (which were posted in an
earlier draft of the current paper).

A closely related
concept to universal rigidity is \emph{global rigidity} in $\R^d$, which is 
similar except that the
other configurations $\q$,
where corresponding edge lengths are the same,
are restricted to be in $\R^d$.  Clearly, if
$(G,\p)$ is universally rigid, then it is automatically globally
rigid in $\R^d$.  But in 
most results, for a framework to be globally rigid,  it is assumed
that the configuration $\p$ is \emph{generic}, which means that there
is no non-zero integral polynomial relation among the coordinates of
$\p$, and it may be very hard to verify that some specific framework
is acting  generically.
So, when possible, the stronger condition of
universal rigidity can be a useful condition that is sufficient 
to show that 
particular configuration is  globally rigid in $\R^d$.

\subsection{Definitions}

 The basic tool we use in this paper is a stress $\omega =( \dots,
 \omega_{ij}, \dots )$, which is an assignment of a real scalar
 $\omega_{ij}=\omega_{ji}$ to each edge, $\{i,j\} \in E(G)$.  We assume
 $\omega_{ij}=0$, when $\{i,j\} \not\in E(G)$.  We say that a stress
 $\omega$ 
is an \emph{equilibrium} stress for $(G,\p)$ 
if the vector equation
\begin{equation}\label{eqn:equilibrium}
 \sum_i \omega_{ij}(\p_i-\p_j) = 0
 \end{equation}
holds for all vertices $j$ of $G$.  We associate an $N$-by-$N$
\emph{stress matrix} $\Omega$ to a stress $\omega$, for $N$, the total
number of vertices, by saying that $i,j$ entry of $\Omega$ is
$-\omega_{ij}$, for $i \ne j$, and the diagonal entries of $\Omega$
are such that the row and column sums of $\Omega$ are zero.  If the
dimension of the affine span of the vertices $\p$ is $d$, then the
rank of an equilibrium stress matrix 
$\Omega$ is at most $N-d-1$, but it could be less.

We say that $\vv= \{\vv_1, \dots, \vv_m\}$, a finite
collection of non-zero vectors in $\R^d$, lie on a \emph{conic at
  infinity} of $\R^d$ if, when regarded as points in real projective $(d-1)$
space $\RP^{d-1}$, they lie on a conic.  This means that there is a
non-zero $d$-by-$d$ symmetric matrix $Q$ such that for all $i = 1,
\dots, m$, $\vv_i^tQ\vv_i=0$, where $()^t$ is the transpose operation.

\subsection{Basic Results}

\begin{definition} 
A framework $(G,\p)$ in $\R^d$ is said 
to be \emph{universally rigid} if any
other corresponding framework with the same edge lengths in $\R^D$,
for any $D$, is congruent to $(G,\p)$.
\end{definition}

The following fundamental theorem \cite{connelly-energy} is a basic
tool used to establish  universal rigidity.

\begin{theorem} \label{thm:fundamental}  Let $(G,\p)$ be a framework 
whose affine span of $\p$ is all of $\R^d$, with an equilibrium stress
$\omega$ and stress matrix $\Omega$.  Suppose further
\begin{enumerate}[(i)]
\item \label{condition:positive} $\Omega$ is positive semi-definite (PSD).
\item \label{condition:rank} The rank of $\Omega$ is $N - d - 1$.
\item \label{condition:conic} The edge directions of $(G,\p)$ do not lie on a conic at infinity of $\R^d$.
\end{enumerate}
Then $(G,\p)$ is universally rigid.
\end{theorem}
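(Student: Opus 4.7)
The plan is to show universal rigidity via an energy argument with the stress matrix playing the role of a Lyapunov-type function. I would first introduce the quadratic energy
\[
E(\q) \;=\; \sum_{\{i,j\}\in E(G)} \omega_{ij}\,|\q_i-\q_j|^2 \;=\; \sum_{k=1}^{D} \q^{(k)T}\,\Omega\,\q^{(k)},
\]
where $\q^{(k)} \in \R^N$ is the vector of $k$-th coordinates of a configuration $\q$ in $\R^D$.  By condition~(\ref{condition:positive}), $\Omega$ is PSD, so $E(\q)\geq 0$ for every $\q$, with equality iff every coordinate vector $\q^{(k)}$ lies in $\ker\Omega$.

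Next I would identify $\ker\Omega$ precisely.  The row/column sum property puts $\mathbf{1}\in\ker\Omega$, and the equilibrium equation~(\ref{eqn:equilibrium}) says that each coordinate vector $\p^{(k)}$ of $\p$ also lies in $\ker\Omega$.  Since the affine span of $\p$ is all of $\R^d$, these $d+1$ vectors are linearly independent, and by condition~(\ref{condition:rank}) the kernel has dimension exactly $d+1$, so they form a basis of $\ker\Omega$.

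Now let $(G,\q)$ be any framework in $\R^D$ with the same edge lengths as $(G,\p)$.  Then trivially $E(\q)=E(\p)=0$, so each $\q^{(k)}$ lies in $\ker\Omega$ and is therefore an affine combination of the $\p^{(k)}$'s together with $\mathbf{1}$.  Rewriting this on the configuration side, there is a linear map $A\colon \R^d\to\R^D$ and a vector $\b\in\R^D$ with $\q_i = A\p_i + \b$ for every vertex $i$.  Matching edge lengths then gives
\[
(\p_i-\p_j)^{T}\,(A^{T}A - I_d)\,(\p_i-\p_j)\;=\;0 \qquad \text{for every edge } \{i,j\}.
\]

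The final step, where condition~(\ref{condition:conic}) does the work, is the place I expect to be the subtle point of the argument rather than a hard calculation.  Setting $Q := A^{T}A - I_d$, a symmetric $d\times d$ matrix, the above display says that every edge direction $\p_i-\p_j$ satisfies $\vv^{T} Q \vv = 0$, i.e.\ lies on the quadric at infinity defined by $Q$.  The hypothesis that the edge directions do not lie on any conic at infinity forces $Q=0$, so $A^{T}A=I_d$ and $A$ is an isometric embedding of $\R^d$ into $\R^D$.  Hence $\q$ is congruent to $\p$, proving $(G,\p)$ is universally rigid.
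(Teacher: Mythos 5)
Your argument is correct and is essentially the standard energy/stress-matrix proof of this theorem; the paper itself states the result without proof, citing \cite{connelly-energy}, where precisely this argument (nonnegative quadratic energy, identification of $\ker\Omega$ with the span of $\mathbf{1}$ and the coordinate vectors of $\p$, the resulting affine relation $\q_i=A\p_i+\b$, and the conic-at-infinity condition forcing $A^TA=I_d$) is carried out. No gaps: the identity $E(\q)=\sum_k \q^{(k)T}\Omega\,\q^{(k)}$, the dimension count $\dim\ker\Omega=d+1$, and the final congruence conclusion all check out.
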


There are several examples of the universally rigid frameworks in
\cite{Connelly-Gortler-iterated}, where Conditions
(\ref{condition:positive}) and (\ref{condition:rank}) of Theorem
\ref{thm:fundamental} do not hold, and yet they are still universally
rigid.

\begin{definition} When conditions (i), (ii), (iii) 
hold for a framework $(G,\p)$ with  affine span  $\R^d$,  
we say it is \emph{super stable}.  
\end{definition}

\begin{remark}
If a framework $(G,\p)$ in $\R^d$ 
happens to have an  affine span of some smaller dimension $d'$, then
the framework can be rigidly placed in $\R^{d'}$ and 
Theorem~\ref{thm:fundamental}
can be applied if appropriate. In this case, we also say that 
$(G,\p)$ is super stable.
\end{remark}

When
the sign of the stress $\omega_{i,j}$ is positive (respectively
negative), then the constraint on the lengths of the edges of the
possible alternative configurations $\q$ can be weakened to be not
longer (respectively not shorter), and the conclusion 
of 
Theorem~\ref{thm:fundamental}
still
holds. Those edges with a positive stress are called \emph{cables} and
those with a negative stress are called 
\emph{struts}.  When possible, in the
following, we will designate cables with dashed line segments and
struts with heavy solid line segments.  The default is that the edge
lengths are constrained to stay the same length.

\begin{definition} 
A framework $(G,\p)$ in $\R^d$
with a $d'$-dimensional span is said 
to be \emph{dimensionally rigid} if any
other corresponding framework with the same edge lengths in $\R^D$,
for any $D$, has affine span at most $d'$.
\end{definition}

For completeness, we note the following from~\cite{alfakih2014local}:
\begin{theorem}\label{thm:Alfakih-dim-rigid} 
If $(G,\p)$ is a dimensionally rigid framework in $\R^d$
with $N$ vertices whose
affine span is $d$-dimensional
and Condition (\ref{condition:conic}) of Theorem \ref{thm:fundamental} holds, then $(G,\p)$ is universally rigid.
\end{theorem}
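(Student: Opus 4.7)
The plan is to use dimensional rigidity to trap any equivalent framework inside a $d$-dimensional affine image of $(G,\p)$, and then invoke condition (iii) to upgrade the resulting affine map into a genuine isometry. The bridge between the two is a doubling construction that stacks the alleged alternative on top of $\p$.

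Concretely, assume $(G,\q)$ in $\R^D$ is equivalent to $(G,\p)$, and form the composite framework $\r_i := \tfrac{1}{\sqrt{2}}(\p_i,\q_i) \in \R^{d+D}$. The equality $|\p_i-\p_j|=|\q_i-\q_j|$ on every edge yields $|\r_i-\r_j|^2 = |\p_i-\p_j|^2$ on every edge, so $(G,\r)$ is itself equivalent to $(G,\p)$. The projection onto the first $d$ coordinates maps the affine span $V$ of $\r$ onto the full affine span $\R^d$ of $\p$, forcing $\dim V \ge d$; dimensional rigidity of $(G,\p)$ supplies the reverse bound $\dim V \le d$, so we have equality. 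Hence this projection restricts to an affine isomorphism from $V$ onto $\R^d$, and composing its inverse with projection onto the last $D$ coordinates produces an affine map $\q_i = B\p_i + b$ with $B$ linear from $\R^d$ to $\R^D$.

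It remains to show $B$ is an isometric embedding. Equivalence forces $(\p_i-\p_j)^t(B^tB - I)(\p_i-\p_j) = 0$ for every edge $\{i,j\}$, so the symmetric $d\times d$ matrix $B^tB - I$ vanishes on every edge direction of $(G,\p)$. By condition (iii), the edge directions of $(G,\p)$ do not lie on a conic at infinity, so $B^tB - I = 0$; that is, $B^tB = I$, and $B$ is an isometric embedding of $\R^d$ into $\R^D$. Consequently $(G,\q)$ is congruent to $(G,\p)$, proving universal rigidity.

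The only genuine content is the doubling construction $\r_i = (\p_i,\q_i)/\sqrt{2}$, which converts dimensional rigidity into the linear statement $\q_i = B\p_i + b$; once this is in hand, the conic-at-infinity hypothesis performs the final step essentially by the very definition of ``conic at infinity,'' and the rest is routine linear algebra.
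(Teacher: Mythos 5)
Your argument is correct. The paper does not actually prove Theorem~\ref{thm:Alfakih-dim-rigid}; it quotes it from Alfakih's work, so there is no in-paper proof to compare against. Your doubling construction $\r_i=\tfrac{1}{\sqrt2}(\p_i,\q_i)$, followed by the dimension count that forces $\q_i=B\p_i+b$ and the observation that $B^tB-I$ is a symmetric matrix annihilating all edge directions (hence zero by the conic-at-infinity hypothesis), is exactly the standard proof of this result in the literature, and every step checks out.
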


The following result in \cite{Alfakih-bar-frameworks} 
will be important in this paper.
(See also
\cite{Connelly-Gortler-iterated} for another point of view for the
proof.)

\begin{theorem}\label{thm:Alfakih-stress} 
If $(G,\p)$ is a dimensionally rigid or 
universally rigid framework in $\R^d$
with $N$ vertices whose
affine span is $d'$-dimensional, $d' \le N-2$, then it has a non-zero 
equilibrium
stress with a positive semi-definite (PSD) stress matrix $\Omega$,
(with rank $\geq 1$).
\end{theorem}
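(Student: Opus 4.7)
My plan is to pass to the Gram-matrix picture and apply a theorem of alternatives for semidefinite programming.

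First I will set up the Gram-matrix reformulation. Let $P = X^T X$ be the Gram matrix of the centered configuration $\p$, viewed as a PSD operator on $\mathbf{1}^\perp \subset \R^N$; its rank equals $d'$. For each edge $\{i,j\} \in E(G)$ set $A_{ij} := (e_i - e_j)(e_i - e_j)^T$ and $c_{ij} := \|\p_i - \p_j\|^2$, and consider the spectrahedron
\[
\mathcal{G} := \{Q \succeq 0 \text{ on } \mathbf{1}^\perp : \langle A_{ij}, Q\rangle = c_{ij} \text{ for every } \{i,j\} \in E(G)\}.
\]
By the standard Gram-matrix dictionary, $\mathcal{G}$ parameterizes (up to congruence) the centered alternative frameworks $\q$ that share the edge lengths of $\p$, with $\mathrm{rank}(Q)$ equal to the affine dimension of $\q$. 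Hence dimensional rigidity (and, \emph{a fortiori}, universal rigidity) asserts that every $Q \in \mathcal{G}$ has rank at most~$d'$.

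Next I exploit the hypothesis $d' \le N - 2$. Since $\dim \mathbf{1}^\perp = N - 1 > d'$, no element of $\mathcal{G}$ has full rank on $\mathbf{1}^\perp$, so $\mathcal{G}$ is feasible but \emph{not} strictly feasible. The classical theorem of alternatives for strict feasibility of an SDP (a Gordan-type lemma for the PSD cone) then yields nonzero weights $\omega = (\omega_{ij})_{\{i,j\} \in E(G)}$ for which
\[
\Omega := \sum_{\{i,j\} \in E(G)} \omega_{ij} A_{ij}
\]
is PSD on $\mathbf{1}^\perp$ and $\sum_{\{i,j\}\in E(G)} \omega_{ij} c_{ij} = 0$.

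A direct inspection of the $A_{ij}$ shows that $\Omega$ constructed this way is exactly the stress matrix attached to the weights $\omega_{ij}$: its off-diagonal support lies in $E(G)$, its row and column sums vanish (so $\Omega \mathbf{1} = 0$), and it is nonzero. The equilibrium condition $\Omega X^T = 0$ then drops out from $\langle \Omega, P\rangle = \sum \omega_{ij} c_{ij} = 0$: two PSD matrices with vanishing trace inner product satisfy $\Omega P = 0$, hence $\Omega X^T X = 0$, which forces $\Omega X^T = 0$ by a standard trace argument. Thus $\Omega$ is the desired nonzero PSD equilibrium stress matrix.

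The delicate point will be the invocation of the theorem of alternatives: one must ensure that the PSD certificate comes purely from combinations of the $A_{ij}$, with no spurious contribution from $\mathbf{1}\mathbf{1}^T$ or $\mu \mathbf{1}^T + \mathbf{1}\mu^T$ that would spoil the stress-matrix pattern. Working intrinsically on $\mathbf{1}^\perp$ from the start sidesteps this, since $A_{ij} \mathbf{1} = 0$ automatically. The hypothesis $d' \le N - 2$ is used precisely to force strict infeasibility on $\mathbf{1}^\perp$ and thereby ensure the alternative produces a nontrivial $\Omega$; the excluded case $d' = N - 1$ corresponds to a configuration spanning a simplex, where strict feasibility holds and no nonzero PSD equilibrium stress need exist.
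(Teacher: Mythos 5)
Your argument is correct. Note that the paper itself gives no proof of Theorem~\ref{thm:Alfakih-stress}: it imports the result from \cite{Alfakih-bar-frameworks} and points to \cite{Connelly-Gortler-iterated} for a second viewpoint, so there is no in-paper proof to compare against; your route (Gram matrix on $\mathbf{1}^\perp$, failure of strict feasibility, separation of the constraint affine subspace from the interior of the PSD cone) is essentially the semidefinite-programming argument of the cited source, as opposed to the facial-reduction viewpoint of \cite{Connelly-Gortler-iterated}. The points that genuinely need care are all handled: working intrinsically on $\mathbf{1}^\perp$ makes the separating functional land in $\mathrm{span}\{A_{ij}\}$ with no contribution from $\mathbf{1}\mathbf{1}^t$, and the lift back to an $N$-by-$N$ matrix is unambiguous and preserves nonzero-ness because every $A_{ij}$ annihilates $\mathbf{1}$; the hypothesis $d'\le N-2$ is exactly what rules out a positive definite point of $\mathcal{G}$ on the $(N-1)$-dimensional space $\mathbf{1}^\perp$; feasibility of $\mathcal{G}$ (it contains $P$) is what upgrades the dual inequality $\sum\omega_{ij}c_{ij}\le 0$ to the equality $\langle\Omega,P\rangle=0$; and $\Omega P=0$ does yield the equilibrium condition $X\Omega=0$ via $(X\Omega)^t(X\Omega)=\Omega P\,\Omega=0$. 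The only cosmetic quibble is that the separation argument guarantees $\Omega\neq 0$ (hence rank $\ge 1$) rather than ``nonzero weights'' per se, but a nonzero $\Omega$ forces the weight vector to be nonzero, which is what the theorem asserts.
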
 

So, in particular, if  $(G, \p)$ has no non-zero PSD 
equilibrium stress matrix, and the dimension of the affine span 
of $\p$ is $\le N-2$, then it is not universally rigid.

\section{Bipartite frameworks and quadrics} \label{section:quadrics}

Let $K(n,m)$ be the complete bipartite graph on $n$ and $m$ vertices, and let 
$\pn$ and $\q=(\q_1, \dots, \q_m)$ be  two
configurations of points in $\R^d$. 
Then we denote by 
$(K(n,m),\p,\q)$ the associated complete bipartite framework.

Recall that a quadric surface in $\R^d$, is the solution to a non-zero
quadratic function in the coordinates of $\R^d$.  For the line $\R$,
a quadric surface is two points.  For the plane $\R^2$, a quadric
surface
is a conic, which includes the possibility of two straight lines as
well an ellipses and a hyperbola.  (We do not need to consider quadrics
that consist of just one hyperplane in $\R^d$.)  By adjoining the
projective space $\RP^{d-1}$, we can complete $\R^d$ to real
projective space $\RP^{d}$, and a quadric will separate $\RP^{d}$ into
two components. For any vector $\x \in \R^d$, define $\hat{\x} \in
\R^{d+1}$ by adding a $1$ as the last coordinate.  
A quadric can be written in the
form $\{\x \in \R^d \mid \hat{\x}^tA\hat{\x}=0\}$, where $A$ is a
$(d+1)$-by-$(d+1)$ symmetric matrix, $\hat{\x}$ is a column vector,
and $\hat{\x}^t$ is its transpose.  So the two components determined
by the matrix $A$ are given by $\hat{\x}^tA\hat{\x}< 0$ and $0 <
\hat{\x}^tA\hat{\x}$.  
\begin{definition} If $\pn$ and $\q=(\q_1, \dots, \q_m)$ are two
configurations of points in $\R^d$, we say that they are
\emph{strictly separated by a quadric}, given by a matrix $A$, if for
each $i=1, \dots, n$ and $j=1, \dots, m$,
\begin{equation}\label{eqn:quadric}
 \hat{\q}_j^tA\hat{\q}_j< 0 <\hat{\p}_i^tA\hat{\p}_i.
\end{equation} 
\end{definition}

A stress matrix for a complete bipartite framework $(K(n,m),\p,\q)$,
has the following form, where $\lambda_1, \dots, \lambda_n$ and $\mu_1, \dots, \mu_m$ are the diagonal entries, whereas all the non-diagonal entries in the upper left and lower right blocks are zero. 
\begin{eqnarray}\label{eqn:stress-matrix}
\Omega =
\begin{pmatrix}
\lambda_1 & 0 & 0 & -\omega_{11} & \cdots & -\omega_{1m}\\
 0 &  \ddots & 0 & \vdots & \ddots & \vdots\\
 0 &  0 & \lambda_n & -\omega_{n1} & \cdots & -\omega_{nm}\\
- \omega_{11} & \cdots & -\omega_{n1} & \mu_1 & 0 & 0 \\
 \vdots & \ddots & \vdots & 0 &  \ddots & 0 \\
  -\omega_{1m} & \dots & -\omega_{nm} & 0 &  0 & \mu_m
\end{pmatrix}
\end{eqnarray}
So the diagonal entries are such that $\sum_{j=1}^m \omega_{ij}= \lambda_i$, and $\sum_{i=1}^n  \omega_{ij}= \mu_j$, from the definition of $\Omega$.

Our first main result is the following necessary condition for the 
universal rigidity of a complete bipartite framework.

\begin{theorem}\label{thm:main} 
If $(K(n,m),(\p,\q))$ is a complete bipartite framework in $\R^d$,
with an affine span of dimension $\leq m+n-2$,
such that the partition vertices $(\p,\q)$ are
strictly separated by a quadric, then it is not universally rigid.
\end{theorem}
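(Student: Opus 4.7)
I plan to argue by contradiction. Suppose $(K(n,m),(\p,\q))$ is universally rigid. Because the affine span has dimension at most $m+n-2 = N-2$, Theorem~\ref{thm:Alfakih-stress} furnishes a non-zero PSD equilibrium stress matrix $\Omega$, in the bipartite block form shown in~(\ref{eqn:stress-matrix}). The aim is to combine the block structure of $\Omega$ with the quadric separation to force $\Omega = 0$, giving the contradiction.

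The crucial step is to extract from the equilibrium condition a matrix identity that decouples the $\p$- and $\q$-sides. Let $\hat{P}$ be the $n \times (d+1)$ matrix whose rows are $\hat{\p}_i^t$, and let $\hat{Q}$ be the analogous $m \times (d+1)$ matrix for the $\q_j$. The equilibrium equations, together with the fact that row sums of $\Omega$ vanish (so that the last, all-ones, column is also killed), package into the single identity that $\Omega$ applied to the vertical stacking of $\hat{P}$ over $\hat{Q}$ is zero. Reading this off block-by-block gives
\[
 D_\lambda \hat{P} \;=\; W\,\hat{Q}, \qquad D_\mu \hat{Q} \;=\; W^t \hat{P},
\]
where $D_\lambda = \mathrm{diag}(\lambda_1,\dots,\lambda_n)$, $D_\mu = \mathrm{diag}(\mu_1,\dots,\mu_m)$, and $W$ is the $n \times m$ matrix with entries $\omega_{ij}$. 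Left-multiplying the first equation by $\hat{P}^t$ and the second by $\hat{Q}^t$ produces two symmetric $(d+1)\times(d+1)$ matrices that are mutual transposes, hence equal:
\[
 \sum_{i=1}^{n} \lambda_i\, \hat{\p}_i \hat{\p}_i^t \;=\; \sum_{j=1}^{m} \mu_j\, \hat{\q}_j \hat{\q}_j^t.
\]

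Now I would take the Frobenius inner product of both sides with the symmetric matrix $A$ defining the separating quadric; writing $f(\x) := \hat{\x}^t A \hat{\x}$, this yields the scalar identity $\sum_i \lambda_i f(\p_i) = \sum_j \mu_j f(\q_j)$. Because $\Omega$ is PSD, its diagonal entries $\lambda_i,\mu_j$ are all non-negative; because the quadric strictly separates the partitions, $f(\p_i) > 0 > f(\q_j)$ for every $i,j$. So the left-hand sum is $\ge 0$ while the right-hand sum is $\le 0$, forcing both to vanish and then $\lambda_i = \mu_j = 0$ for all $i,j$. A PSD matrix with zero diagonal must itself be zero, so $\Omega = 0$, contradicting the non-triviality guaranteed by Theorem~\ref{thm:Alfakih-stress}. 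The main conceptual hurdle, to my mind, is spotting the bipartite decoupling identity above and recognizing that it is exactly in the right shape to be probed by a quadric; beyond that, no invertibility of $D_\lambda$ or $D_\mu$ is needed, and the rest is pure linear algebra.
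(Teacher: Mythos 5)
Your proof is correct and follows essentially the same route as the paper: obtain a non-zero PSD equilibrium stress matrix from Theorem~\ref{thm:Alfakih-stress}, use the equilibrium equations and the symmetry of $A$ to equate $\sum_i \lambda_i \hat{\p}_i^t A \hat{\p}_i$ with $\sum_j \mu_j \hat{\q}_j^t A \hat{\q}_j$, and derive a sign contradiction from the strict separation. The only cosmetic difference is that you first assemble the matrix identity $\hat{P}\Lambda\hat{P}^t=\hat{Q}M\hat{Q}^t$ and then pair with $A$, which is exactly the alternative presentation the paper itself records later via Propositions~\ref{prop:brn} and~\ref{prop:separation}.
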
 

\begin{proof}  Let $A$ be the  $(d+1)$-by-$(d+1)$ symmetric matrix for the separating quadric as above, and let $\omega$ be any equilibrium stress for $(K(n,m),(\p,\q))$ with stress matrix $\Omega$.   For any vertex $\q_j$ in one partition, the equilibrium condition 
of Equation (\ref{eqn:equilibrium}) can be written, for each $j=1, \dots, m$ as 
\[
 \sum_{i=1}^n \omega_{ij}(\hat{\p}_i-\hat{\q}_j) = 0,
\]
or equivalently
\[
 \sum_{i=1}^n \omega_{ij}\hat{\p}_i= \left( \sum_{i=1}^n \omega_{ij} \right) \hat{\q}_j = \mu_j \hat{\q}_j.
\]
Then taking the transpose of this equation, and multiplying on the right by $A\q_j$, we get  
\[
 \sum_{i=1}^n \omega_{ij}\hat{\p}^t_iA\hat{\q}_j= \mu_j \hat{\q}^t_jA\hat{\q}_j.
\]
Similarly, for  $\p_i$ in the other partition,
\[
 \sum_{j=1}^m \omega_{ij}\hat{\q}^t_jA\hat{\p}_i
= \lambda_i \hat{\p}^t_iA\hat{\p}_i.
\]
Since the matrix $A$ is symmetric, 
\begin{equation}\label{eqn:equal}
 \sum_{j=1}^m \mu_j \hat{\q}^t_jA\hat{\q}_j=
\sum_{ij} \omega_{ij}\hat{\p}^t_iA\hat{\q}_j=
\sum_{ij} \omega_{ij}\hat{\q}^t_jA\hat{\p}_i= 
\sum_{i=1}^n\lambda_i \hat{\p}^t_iA\hat{\p}_i.
\end{equation}
By Theorem \ref{thm:Alfakih-stress}, if $(K(n,m),(\p,\q))$ were
universally rigid, then there would be an equilibrium stress with a
stress matrix $\Omega$ that would be PSD and non-zero.  Then $\mu_j
\ge 0$ for all $j= 1, \dots, m$, $\lambda_i \ge 0$ for all $i= 1,
\dots, n$, and we would have at least one positive diagonal term. But
then Equation~(\ref{eqn:equal}) would
contradict the assumed quadric separation condition 
of Equation~(\ref{eqn:quadric}).~$\Box$
\end{proof}\\

This result is a generalization of, and inspired by,
 the main result in \cite{Jordan-universal-line}, which is the result here for the line $d=1$.  
We will see that the quadric separation condition is 
also the critical sufficient 
condition for complete bipartite graphs to be universally rigid, 
including in higher dimensions, but we need to use a technique that allows us to find PSD matrices with a given kernel and appropriate rank,
which we describe in later sections. 
 
\section{The Veronese map} \label{section:Veronese}

Vectors in $\R^d$
will be regarded as column vectors, and in general, for any vector or matrix $X$, we will denote by $\hat{X}$ the same object with a row of $1$'s added on the bottom.   We denote $X^t$ as the transpose of  a matrix or vector  $X$.

If two configurations $\pn$ and $\q=(\q_1, \dots, \q_m)$ in $\R^d$
cannot be separated by a quadric, 
i.e. when 
the condition of Equation 
(\ref{eqn:quadric}) cannot be made to hold for any $A$,
we show here how find a certificate
of this non-separability, 
that can help us to establish universal rigidity.  

\begin{definition}
We define 
$\M_d$, to be the $(d+1)(d+2)/2$-dimensional space of $(d+1)$-by-$(d+1)$
symmetric matrices, which we call the \emph{matrix space}.
\end{definition}

\begin{definition}
We define the map $\V: \R^d \rightarrow \M_d$ by $\V(\vv)
=\hat{\vv}\hat{\vv}^t$, which is a $(d+1)$-by-$(d+1)$ symmetric
matrix, with a lower right-hand coordinate of $1$.
\end{definition}
So $\V( \R^d)$ is a $d$-dimensional set  embedded in a
$(d+1)(d+2)/2-1$-dimensional affine subspace of $\M_d$. The function $\V$
is called the \emph{Veronese map}.  See page 244 of \cite{Matousek},
for very similar properties that are used here.

\begin{proposition}\label{prop:separation} In $\R^d$ the vertices of the configurations $\p$ and $\q$ can be strictly separated by a quadric $A$ as in Section \ref{section:quadrics}, if and only if the matrix configurations
$\V(\p)$ and  $\V(\q)$ can be strictly separated by the the hyperplane given by $A$ in $\M_d$.
\end{proposition}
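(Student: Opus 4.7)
The plan is to reduce the proposition to a single bilinear identity, namely that for any $\x \in \R^d$ and any symmetric $(d+1)$-by-$(d+1)$ matrix $A$,
\[
\hat{\x}^t A \hat{\x} = \langle A, \V(\x) \rangle,
\]
where $\langle B, C \rangle := \mathrm{tr}(BC)$ is the Frobenius/trace inner product on $\M_d$. This identity is immediate from the definition $\V(\x) = \hat{\x}\hat{\x}^t$ and the cyclic property of trace: $\mathrm{tr}(A \hat{\x}\hat{\x}^t) = \hat{\x}^t A \hat{\x}$. So the first step is simply to record this fact.

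Next, I would fix the interpretation of the statement. The hyperplane ``given by $A$'' in $\M_d$ should be read as the linear hyperplane $H_A := \{M \in \M_d : \langle A, M\rangle = 0\}$ whose normal is $A$; then strict separation of $\V(\p)$ and $\V(\q)$ by $H_A$ means that the linear functional $M \mapsto \langle A, M \rangle$ takes opposite (strict) signs on the two configurations, say $\langle A, \V(\q_j) \rangle < 0 < \langle A, \V(\p_i) \rangle$ for all $i,j$. Since $A$ is symmetric, it is in the span $\M_d$ (rather than in general matrix space), so restricting attention to symmetric $A$ is consistent with considering hyperplanes in $\M_d$.

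Now I would combine the two observations. Substituting the identity above, the hyperplane-separation inequalities become
\[
\hat{\q}_j^t A \hat{\q}_j < 0 < \hat{\p}_i^t A \hat{\p}_i,
\]
which is exactly the quadric-separation condition of Equation~(\ref{eqn:quadric}). Conversely, any symmetric $A$ witnessing quadric separation also witnesses, via $H_A$, a hyperplane separation of $\V(\p)$ and $\V(\q)$. This gives the ``if and only if'' and completes the proof.

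The main obstacle is really nil: the proposition is a definitional translation, and the only thing to be careful about is interpreting ``the hyperplane given by $A$'' as the linear hyperplane $H_A \subset \M_d$ with normal $A$, and noting that $\V(\R^d)$ lies in the affine slice of $\M_d$ where the bottom-right entry equals $1$, so there is no ambiguity between ``zero'' and ``non-zero'' values of $\hat{\x}^t A \hat{\x}$ across the two partitions -- strict inequality on both sides is what matches strict hyperplane separation.
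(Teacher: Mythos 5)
Your proposal is correct and follows essentially the same route as the paper: the paper's proof is exactly the one-line trace identity $\langle A, \V(\x)\rangle = \mathrm{tr}(A\hat{\x}\hat{\x}^t) = \hat{\x}^tA\hat{\x}$, applied to each $\p_i$ and $\q_j$ to translate the quadric inequalities of Equation~(\ref{eqn:quadric}) into hyperplane-separation inequalities in $\M_d$. Your additional remarks about interpreting ``the hyperplane given by $A$'' and the affine slice where the bottom-right entry is $1$ are sensible clarifications but not a different argument.
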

\begin{proof}
The configurations $\p$ and $\q$ are separated by the quadric given by the matrix $A$ when 
\[  \langle A, \V(\q_j) \rangle  =
\text{tr}(A\hat{\q}_j\hat{\q}_j^t) = 
\hat{\q}_j^tA\hat{\q}_j  < 0 < \hat{\p}_i^tA\hat{\p}_i = 
\text{tr}(A\hat{\p}_i\hat{\p}_i^t) = 
\langle A, \V(\p_i) \rangle,
\]
where the inner product $ \langle * , * \rangle$ on symmetric matrices is given by the trace operator $\text{tr}$ as above.   $\Box$  \end{proof}

When the configurations $\p$ and $\q$ cannot be separated by a quadric
in $\R^d$, then from Proposition
\ref{prop:separation}
the convex hull of $\V(\p)$ must
intersect the convex hull of $\V(\q)$ in $\M_d$.
This means that there are 
non-negative coefficients, not all $0$,  denoted as
$\lambda_1, \dots, \lambda_n$ and $\mu_1, \dots, \mu_m$, such that
\begin{equation}\label{eqn:Radon1}
\sum_{i=1}^n \lambda_i \hat{\p}_i \hat{\p}_i^t= 
\sum_{j=1}^m \mu_j \hat{\q}_j \hat{\q}_j^t,
\end{equation}

\begin{definition}
The matrix $\hat{P}$ whose columns are $\hat{\p}_1, \dots, \hat{\p}_n$ is called the \emph{configuration matrix} of $\p$.
\end{definition}

In terms of matrices Equation (\ref{eqn:Radon1}) is the same as saying:
\begin{equation}\label{eqn:Radon}
\hat{P}\Lambda \hat{P}^t=\hat{Q} M \hat{Q}^t,
\end{equation}
 where $\Lambda$ is the $n$-by-$n$ diagonal matrix whose entries are $\lambda_1, \dots, \lambda_n$, and $M$ is the $m$-by-$m$ diagonal matrix whose entries are $\mu_1, \dots, \mu_m$.  This is the starting point for constructing a 
PSD stress matrix in Section \ref{section:SVD}.

\section{The Singular Value Decomposition} \label{section:SVD}
We first show that 
when each of the partition's affine span is
the full $\R^d$, we will not need to worry about  
condition (iii) of Theorem~\ref{thm:fundamental}.

\begin{lemma}\label{lemma:proper-affine}
Suppose that the configurations $\p$ and $\q$ in $\R^d$
each have affine span
equal to all of $\R^d$, and the bipartite framework $(K(n,m),(\p,\q))$
has a stress with stress matrix, $\Omega$, satisfying
(\ref{condition:positive}) and (\ref{condition:rank}) of Theorem
\ref{thm:fundamental}.  Then $(K(n,m),(\p,\q))$ is super stable and
universally rigid.  Likewise, if, instead of (\ref{condition:positive}) and
(\ref{condition:rank}) of Theorem \ref{thm:fundamental},
$(K(n,m),(\p,\q))$ is just dimensionally rigid, then it is still
universally rigid.
\end{lemma}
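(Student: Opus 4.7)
The plan is to verify that condition (\ref{condition:conic}) of Theorem \ref{thm:fundamental} holds automatically whenever both $\p$ and $\q$ separately span $\R^d$ affinely. Once this is established, the first claim is immediate from Theorem \ref{thm:fundamental} (the other two hypotheses are given), and the second claim follows from Theorem \ref{thm:Alfakih-dim-rigid}, since the combined configuration $(\p,\q)$ obviously also has affine span $\R^d$.

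The main step is therefore to show that the edge directions $\{\p_i - \q_j\}_{i,j}$ cannot lie on a conic at infinity. I would argue by contradiction: suppose there is a nonzero symmetric $d \times d$ matrix $Q$ with
$$(\p_i - \q_j)^t Q (\p_i - \q_j) = 0 \quad \text{for all } i,j.$$
Expanding, this reads $\p_i^t Q \p_i + \q_j^t Q \q_j = 2\,\p_i^t Q \q_j$ for every pair $(i,j)$. Fixing $i$ and subtracting the instance $j=1$ from the instance of general $j$ isolates
$$\q_j^t Q \q_j - \q_1^t Q \q_1 = 2\,\p_i^t Q (\q_j - \q_1).$$
The left-hand side is independent of $i$, so subtracting this identity for $i$ and for $i=1$ yields
$$(\p_i - \p_1)^t Q (\q_j - \q_1) = 0 \quad \text{for all } i,j.$$

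Since $\p$ has affine span $\R^d$, the vectors $\{\p_i - \p_1\}_{i=2,\dots,n}$ span $\R^d$; since $\q$ has affine span $\R^d$, the vectors $\{\q_j - \q_1\}_{j=2,\dots,m}$ also span $\R^d$. The bilinear form $Q$ therefore vanishes on a spanning pair, forcing $Q=0$, a contradiction. This establishes condition (\ref{condition:conic}).

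With condition (\ref{condition:conic}) in hand, Theorem \ref{thm:fundamental} applies under the given hypotheses (i) and (ii) to conclude super stability and hence universal rigidity. For the alternative hypothesis of dimensional rigidity, the same argument supplies (\ref{condition:conic}), after which Theorem \ref{thm:Alfakih-dim-rigid} gives universal rigidity. I do not anticipate a serious obstacle here, as the only real content is the short bilinear-algebra argument above; the two conclusions are then read off from the previously recorded theorems.
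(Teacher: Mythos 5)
Your proposal is correct and follows essentially the same route as the paper: expand the conic-at-infinity condition $(\p_i-\q_j)^tQ(\p_i-\q_j)=0$, subtract instances to obtain a bilinear relation between difference vectors of $\p$ and of $\q$, and use the two affine-span hypotheses to force $Q=0$. Your version performs one additional subtraction to arrive at the clean identity $(\p_i-\p_1)^tQ(\q_j-\q_1)=0$ for all $i,j$, which is a slightly tidier way of packaging the same dichotomy the paper states (either $\q$ satisfies a nontrivial affine constraint or the differences of $\p$ lie in $\ker Q$), and the reduction of both conclusions to Theorem~\ref{thm:fundamental} and Theorem~\ref{thm:Alfakih-dim-rigid} matches the paper.
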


\begin{proof}
We have only to check (\ref{condition:conic}) of Theorem
\ref{thm:fundamental}, that the edge directions do not lie on a conic
at infinity of $\R^d$.  Suppose there is a non-zero symmetric
$d$-by-$d$ matrix $Q$ such that $(\p_1-\q_j)Q(\p_1-\q_j)^t=0$ and
$(\p_2-\q_j)Q(\p_2-\q_j)^t=0$, for all $j=1, \dots, m$.  Expanding
these terms and subtracting we get
\begin{eqnarray*}
\p_1Q\p^t_1 - 2 \p_1Q\q^t_j &-& \p_2Q\p^t_2 + 2 \p_2Q\q^t_j =0,\,\,\,\, 
\text{which gives us}\\
\p_1Q\p^t_1 -  \p_2Q\p^t_2 &=&2(\p_1-\p_2)^tQ\q^t_j ,
\end{eqnarray*}
which is a non-trivial affine linear 
constraint on the vertices of $\q$, unless for all $\p_i$ and $\p_k$,  
\[
(\p_i-\p_k)^tQ=0
\]
The first case 
implies that the vertices of $\q$ lie in a proper affine subspace, while
the latter
implies that the vertices of $\p$ lie in a proper affine subspace. $\Box$
\end{proof}

Alfakih and Ye in \cite{Alfakih-Ye-general-position} show that if a
configuration of a framework is in general position and satisfies
(\ref{condition:positive}) and (\ref{condition:rank}) of Theorem
\ref{thm:fundamental}, then it is universally rigid.  Lemma
\ref{lemma:proper-affine}
is more precise and general for complete
bipartite graphs.

 For any diagonal matrix $X$, with non-negative entries, denote $X^{1/2}$ as another diagonal matrix whose entries are the square roots of the entries of $X$.  

\begin{definition}
For any $a$-by-$b$ matrix $X$, $a \le b$, a \emph{singular value
  decomposition} (SVD) is a factoring $X = U S V^t$, where $U$ is an
$a$-by-$a$ orthogonal matrix, $V$ is a $b$-by-$b$ orthogonal matrix,
and $S$ is the matrix of 
$S=[D, 0]$, where $D$ is an
$a$-by-$a$ diagonal matrix of non-negative
$\emph{singular values}$.
  Such a decomposition always exists (see
e.g.~\cite{Horn-Johnson}).
\end{definition}

Our next step is to show that when Equation
(\ref{eqn:Radon}) holds,  
$\hat{P}\Lambda^{1/2}$ and $\hat{Q}M^{1/2}$ must share their singular values
and their left singular structure.

\begin{lemma}\label{lemma:factor} Suppose Equation
(\ref{eqn:Radon}) holds where $\Lambda$ and $M$ are non-negative,
diagonal matrices as above.  Then the SVD factors can be taken such that 
$\hat{P}\Lambda^{1/2} = U S_n V^t_n$, and $\hat{Q}M^{1/2} = U S_m V^t_m$,
with a common matrix $U$ and  where $S_n S_n^t=S_m S_m^t$.
\end{lemma}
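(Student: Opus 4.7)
The plan is to recognize Equation (\ref{eqn:Radon}) as the identity $AA^t = BB^t$ where $A := \hat{P}\Lambda^{1/2}$ and $B := \hat{Q}M^{1/2}$. This common $(d+1) \times (d+1)$ matrix $N$ is symmetric and positive semi-definite, so I would first take its spectral decomposition $N = U D^2 U^t$ with $U$ orthogonal and $D$ a non-negative diagonal matrix (entries in some fixed order). The orthogonal matrix $U$ is the candidate common left singular factor promised by the lemma, and the diagonal of $D^2$ gives the common squared singular values.

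The next step is to promote $U$ into an actual left factor of both SVDs. Multiplying on the left by $U^t$ and on the right by $U$ gives $U^t A A^t U = D^2$, which says that the rows of $U^t A$ are pairwise orthogonal with the $i$-th row having squared norm equal to the $i$-th diagonal entry of $D^2$. If $r' = \mathrm{rank}(N)$, then $r'$ rows of $U^t A$ have the form $\tilde D\,W$ for an $r' \times n$ matrix $W$ with orthonormal rows in $\R^n$ (where $\tilde D$ collects the positive entries of $D$), and the remaining rows are zero. Extending the rows of $W$ to an orthonormal basis of $\R^n$ and inserting the extensions into the positions of $V_n^t$ that match the zero entries of $S_n$, I obtain an $n \times n$ orthogonal matrix $V_n$ with $A = U S_n V_n^t$ and $S_n = [D,\,0]$. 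Repeating the argument verbatim with $B$ in place of $A$ yields $B = U S_m V_m^t$ with $S_m = [D,\,0]$ (now of size $(d+1) \times m$), using the \emph{same} $U$ and the \emph{same} $D$. Consequently $S_n S_n^t = D^2 = S_m S_m^t$, which is the required conclusion.

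The main obstacle is the degenerate case when $N$ is singular or has repeated eigenvalues. Then the SVD is genuinely non-unique: the right singular vectors corresponding to zero or coincident singular values are not pinned down by $U$ alone, and one must be careful that the extensions can be carried out independently for $A$ and $B$ while still respecting the specific block form of $S_n$ and $S_m$ demanded by the paper's SVD convention. Fortunately, there is no coupling constraint between $V_n$ and $V_m$, so the two extensions are independent Gram--Schmidt completions in $\R^n$ and $\R^m$ respectively; the work is essentially bookkeeping once one fixes the ordering of the eigenvalues of $N$. A minor preliminary remark is that the paper's SVD convention assumes the matrix is short-and-fat, so one implicitly uses $d+1 \le n$ and $d+1 \le m$; if this fails, the relevant configuration already spans too small a subspace and the statement reduces to working in the span of $U$.
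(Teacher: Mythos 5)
Your proposal is correct and follows essentially the same route as the paper: both arguments observe that $\hat{P}\Lambda^{1/2}(\hat{P}\Lambda^{1/2})^t = \hat{Q}M^{1/2}(\hat{Q}M^{1/2})^t$ and extract the shared left singular factor $U$ and shared singular values from the spectral decomposition of this common Gram matrix. The paper simply asserts that the left singular structure of a matrix $X$ is determined by $XX^t$, whereas you spell out the construction of $V_n$ and $V_m$ by completing the orthonormal rows of $U^t\hat{P}\Lambda^{1/2}$ and $U^t\hat{Q}M^{1/2}$; this is a welcome but inessential elaboration.
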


\begin{proof}
By definition,
the squared singular values and the left singular vectors of
$\hat{P}\Lambda^{1/2}$
are the eigenvalues and eigenvectors of
$\hat{P} \Lambda \hat{P}^t$.
Likewise, the squared singular values and the left singular vectors of
$\hat{Q}M^{1/2}$
are the eigenvalues and eigenvectors of
$\hat{Q} M \hat{Q}^t$.

Since, by assumption,
$\hat{P} \Lambda \hat{P}^t =  \hat{Q} M \hat{Q}^t$,
these singular values and left singular vectors agree.
Thus we can pick a single shared $(d+1)$-by-$(d+1)$ matrix $U$, along with with appropriately
sized diagonal matrices $S_n$ and $S_m$, and appropriate
orthogonal matrices
$V_n$ and $V_m$,
such that we obtain the singular value decompositions:
\[
\hat{P}\Lambda^{1/2} = U S_n V_n^t \,\,\,\,\text{and}\,\,\,\, \hat{Q}M^{1/2} = U S_m V_m^t
\]
where $S_n S_n^t = S_m S_m^t$.  In particular
\[
S_n = D[I^{d+1},0^{n-d-1}]  \,\,\,\,\text{and}\,\,\,\, S_m = D[I^{d+1},0^{m-d-1}]
\]
for a single shared diagonal matrix $D$. $\Box$
\end{proof}

Our next result is our main sufficient condition for the universal
rigidity of a complete bipartite framework. The central idea is to use
the conditions of Equation (\ref{eqn:Radon}) to directly construct
$\Omega$, a PSD equilibrium stress matrix for $(K(n,m),\p,\q)$ that has
rank $n+m-d-1$.  To do this we will use the SVD provided by 
Lemma~\ref{lemma:factor} in
order to \emph{transform} the matrix $[\hat{P},\hat{Q}]$ into a very
specific and simple canonical form. It will be easy to see that this
canonical form is annihilated by a certain simple PSD matrix $\Psi$
described below. We can then reverse this transformation, thus
constructing a $\Omega$ with the same signature as $\Psi$.

\begin{theorem}\label{thm:SVD} 
Let $\p$ and $\q$ be configurations 
(in any dimension), 
such that
Equation~(\ref{eqn:Radon1}) holds with strictly positive coefficients.  Then
the framework $(K(n,m),\p,\q)$ is super stable, and thus universally
rigid.  Additionally, the affine span of $\p$ is the same as the
affine span of $\q$.
\end{theorem}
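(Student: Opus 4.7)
The plan is to use Equation~(\ref{eqn:Radon1}) together with Lemma~\ref{lemma:factor} to construct an explicit PSD equilibrium stress matrix $\Omega$ of the bipartite form~(\ref{eqn:stress-matrix}) whose rank equals $n+m-d'-1$, where $d'$ is the common affine-span dimension of $\p$ and $\q$. Once such an $\Omega$ is in hand, Lemma~\ref{lemma:proper-affine} (applied inside this common affine span) will yield super stability and hence universal rigidity. The equality of affine spans is a byproduct: since $\Lambda$ and $M$ are strictly positive diagonal matrices, the column space of $\hat{P}\Lambda \hat{P}^t$ equals that of $\hat{P}$, and similarly for $\hat{Q}$, so Equation~(\ref{eqn:Radon}) forces the two homogenized column spaces, and therefore the affine spans, to coincide.

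Next I would invoke Lemma~\ref{lemma:factor} to obtain $\hat{P}\Lambda^{1/2}=US_nV_n^t$ and $\hat{Q}M^{1/2}=US_mV_m^t$ with a shared $U$, where $S_n$ and $S_m$ both consist of a common $(d'+1)\times(d'+1)$ diagonal block $D$ of strictly positive singular values padded by zero columns. Define the invertible block matrix $T:=\begin{pmatrix}\Lambda^{1/2}V_n & 0\\ 0 & M^{1/2}V_m\end{pmatrix}$; a direct computation gives $[\hat{P},\hat{Q}]\,T=U[S_n,S_m]$, so in these ``canonical'' coordinates the first $d'+1$ columns of $S_n$ and $S_m$ agree (both equal $D$), while the rest vanish. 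In canonical coordinates I then propose the candidate
\[
\Omega_{\mathrm{can}} := \begin{pmatrix} I_n & -J\\ -J^t & I_m\end{pmatrix},
\]
where $J$ is the $n\times m$ matrix with $J_{ii}=1$ for $i\le d'+1$ and $J_{ij}=0$ otherwise. A short calculation shows $\Omega_{\mathrm{can}}$ is PSD of rank $n+m-d'-1$ (its quadratic form is $\sum_{i\le d'+1}(x_i-y_i)^2+\sum_{i>d'+1}x_i^2+\sum_{j>d'+1}y_j^2$), and that $[S_n,S_m]\,\Omega_{\mathrm{can}}=0$.

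Setting $\Omega := T\,\Omega_{\mathrm{can}}\,T^t$ then produces, by congruence, a PSD matrix of the same rank $n+m-d'-1$ satisfying $[\hat{P},\hat{Q}]\,\Omega=0$, which is the full equilibrium condition including row-sum zero. The main delicacy --- and the whole reason for insisting on identity blocks on the diagonal of $\Omega_{\mathrm{can}}$ --- is verifying that $\Omega$ actually retains the bipartite form~(\ref{eqn:stress-matrix}). Its upper-left $n\times n$ block is $\Lambda^{1/2}V_nV_n^t\Lambda^{1/2}=\Lambda$, which is diagonal precisely because $V_n$ is orthogonal, and similarly its lower-right block equals $M$; any non-identity diagonal choice in $\Omega_{\mathrm{can}}$ would generically be ruined by conjugation with the non-diagonal $T$. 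With this bipartite PSD stress matrix of the correct rank constructed, Lemma~\ref{lemma:proper-affine} applied inside the common affine span concludes that $(K(n,m),\p,\q)$ is super stable, and hence universally rigid.
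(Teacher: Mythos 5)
Your proposal is correct and follows essentially the same route as the paper's proof: your $\Omega_{\mathrm{can}}$ is exactly the paper's canonical matrix $\Psi$ of Equation~(\ref{eqn:SVD2}), your $T$ is the same congruence built from Lemma~\ref{lemma:factor}, and the key observations (diagonal blocks stay diagonal because $V_n,V_m$ are orthogonal; rank and positive semi-definiteness are preserved by the invertible congruence; Lemma~\ref{lemma:proper-affine} disposes of the conic-at-infinity condition) all match. The only difference is cosmetic: you deduce the equality of affine spans directly from the column spaces in Equation~(\ref{eqn:Radon}) rather than from the equilibrium condition with positive diagonals, and both arguments are valid.
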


\begin{proof}
Let $d$ be the dimension of the
combined span of $(\p,\q)$.
Without loss of generality,
we can 
rigidly place $(\p,\q)$ in $\R^{d}$ and continue.

By Lemma \ref{lemma:factor} we have 
the following $(d+1)$-by-$(n+m)$ matrix equality:
\[
[\hat{P}\Lambda^{1/2}, \hat{Q}M^{1/2}]=[ U S_n V^t_n, U S_m V^t_m],
\]
 where  $U, V_n, V_m$ are orthogonal matrices, of the appropriate size, \\$S_n=D[I^{d+1},0^{n-d-1}]$, and $S_m=D[I^{d+1},0^{m-d-1}]$, where $D$ is a $(d+1)$-by-$(d+1)$ diagonal matrix, 
$I^{d+1}$ is the  $(d+1)$-by-$(d+1)$ identity matrix, $0^{n-d-1}$ is a $(d+1)$-by-$(n-d-1)$ zero matrix and $0^{m-d-1}$ is a $(d+1)$-by-$(m-d-1)$ zero matrix.  Then
\begin{eqnarray}\label{eqn:SVD}
[\hat{P}, \hat{Q}]&=&[\hat{P}\Lambda^{1/2}, \hat{Q}M^{1/2}]
\begin{bmatrix}
\Lambda^{-1/2} & 0\\
0 & M^{-1/2}
\end{bmatrix}\\
&=& UD[I^{d+1},0^{n-d-1}, I^{d+1},0^{m-d-1}]
\begin{bmatrix}
V^t_n & 0\\
0 & V^t_m
\end{bmatrix}
\begin{bmatrix}
\Lambda^{-1/2} & 0\\
0 & M^{-1/2}
\end{bmatrix}.\nonumber
\end{eqnarray}
(The matrices
 $\Lambda^{-1/2}$ and $M^{-1/2}$ are well defined due to our assumption
of strictly positive coefficients.)

Define the following symmetric $(n+m)$-by-$(n+m)$ matrix
\begin{equation}\label{eqn:SVD2}
\Psi=
\begin{bmatrix}
~I^{d+1} &  & -I^{d+1} &  \\
 & I^{n-d-1} & & \\
 -I^{d+1} &   & ~I^{d+1} &  \\
  &   &  & I^{m-d-1}
\end{bmatrix},
\end{equation}
where the blank entries are zero matrices of the appropriate dimensions.  It is easy to check that 
\[
[I^{d+1},0^{n-d-1}, I^{d+1},0^{m-d-1}]\Psi=0,
\]
and that $\Psi$ is PSD of rank $n+m-d-1$.  Then we define a stress matrix
\begin{equation} \label{eqn:stress-def}
\Omega = 
\begin{bmatrix}
\Lambda^{1/2} & 0\\
0 & M^{1/2}
\end{bmatrix}
\begin{bmatrix}
V_n & 0\\
0 & V_m
\end{bmatrix}
\Psi
\begin{bmatrix}
V^t_n & 0\\
0 & V^t_m
\end{bmatrix}
\begin{bmatrix}
\Lambda^{1/2} & 0\\
0 & M^{1/2}
\end{bmatrix}.
\end{equation}
Clearly $\Omega$ has zero entries for all of the non edges of the
complete bipartite graph.  Thus by unraveling Equations (\ref{eqn:SVD}),
(\ref{eqn:SVD2}) and (\ref{eqn:stress-def}),
 and using the assumption
that the diagonal entries of $\Lambda$ and $M$ are all positive, 
we see that 
$\Omega$ is PSD of rank
$n+m-d-1$, and $[\hat{P}, \hat{Q}]\Omega=0$.
This is sufficient to obtain
conditions (\ref{condition:positive}) and (\ref{condition:rank}) of
Theorem \ref{thm:fundamental}.

The equilibrium condition of Equation
(\ref{eqn:equilibrium}) at each vertex, and the non-zero diagonal entries in the stress matrix, imply that each $\p_i$ is in the affine span of $\q$, and similarly each $\q_j$ is in the affine span of $\p$.  
So the affine span of $\p$ is the same as the affine span of $\q$, which
by our assumptions must then be all of $\R^d$.  
Lemma \ref{lemma:proper-affine} then implies that condition (\ref{condition:conic}) of Theorem \ref{thm:fundamental} holds. 
$\Box$
\end{proof}

The next two corollaries describe partial converses to our Theorem \ref{thm:main}, each requiring
some kind of general position for the configuration. Without any such assumptions,
the converse of Theorem \ref{thm:main} does not hold. In Section 
\ref{section:algorithm}, we use our Theorem \ref{thm:SVD} as the basis of
a complete algorithm
for determining the universal and dimensional rigidity of any 
complete bipartite framework.

\begin{definition} A configuration $\p$ in $\R^d$
is in \emph{general position} if every $k+1$ of the points of $\p$ span a $k$-dimensional affine subspace for $k = 1, \dots, d$.  
\end{definition}

\begin{corollary}\label{cor:general-position}  
Let $\p$ and $\q$ be configurations in $\R^d$.
Suppose there exists 
subsets of the corresponding configurations $\p'
\subset \p$ and $\q' \subset \q$, such that 
that the points of $(\p',\q')$ are in general position in
$\R^d$, and such that there 
is no quadric strictly
separating $\p'$ and $\q'$.  
Then $(K(n,m),\p,\q)$ is universally
rigid.  Additionally the affine span of $\p$ and the affine span of 
$\q$ must be all of $\R^d$.
\end{corollary}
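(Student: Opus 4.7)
The plan is to apply Theorem~\ref{thm:SVD} to a subframework indexed by the supports of a non-negative convex combination, after using general position to show these supports are large enough that the subframework affinely spans all of $\R^d$, and then to pin the remaining vertices using the complete bipartite edges.

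First, by Proposition~\ref{prop:separation}, the non-separability hypothesis for $(\p',\q')$ in $\R^d$ is equivalent to the statement that no hyperplane in $\M_d$ strictly separates $\V(\p')$ from $\V(\q')$. Since both convex hulls are finite, hence compact, this forces $\mathrm{conv}(\V(\p'))$ and $\mathrm{conv}(\V(\q'))$ to intersect, yielding non-negative coefficients $\lambda_i,\mu_j$, not all zero, with $\sum_i \lambda_i \V(\p'_i) = \sum_j \mu_j \V(\q'_j)$ and $\sum_i \lambda_i = \sum_j \mu_j = 1$. Let $I$ and $J$ denote the supports of $\lambda$ and $\mu$ respectively.

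Next, I would use affine general position of $(\p',\q')$ to show $|I|,|J|\ge d+1$ via a rank argument; this step is the main technical obstacle. Since each $\V(\p'_i)=\hat{\p}'_i(\hat{\p}'_i)^t$ has column span equal to the line $\R\hat{\p}'_i$, the column span of $\sum_{i\in I}\lambda_i\V(\p'_i)$ (with all $\lambda_i>0$) equals $\mathrm{span}\{\hat{\p}'_i:i\in I\}$ in $\R^{d+1}$, and similarly for the right-hand side; these two linear subspaces of $\R^{d+1}$ must therefore coincide, and intersecting with the hyperplane where the last coordinate is $1$ shows that the affine spans of $\p'|_I$ and of $\q'|_J$ in $\R^d$ are equal, with a common dimension $d'$ satisfying, under general position, $d'+1 = \min(|I|,d+1) = \min(|J|,d+1)$. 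Assuming for contradiction that $d'<d$, we get $|I|=|J|=d'+1$, so $\p'|_I \cup \q'|_J$ consists of $2(d'+1)$ points lying in a single $d'$-dimensional affine subspace; but affine general position requires that any $d'+2$ of these points affinely span a $(d'+1)$-dimensional subspace --- a contradiction. Hence $d'=d$ and $|I|,|J|\ge d+1$.

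With $|I|,|J|\ge d+1$ established, Theorem~\ref{thm:SVD} applied to the subframework $(K(|I|,|J|),\p'|_I,\q'|_J)$ with the strictly positive coefficients $\{\lambda_i\}_{i\in I},\{\mu_j\}_{j\in J}$ yields that this subframework is super stable in its common affine span, which is $\R^d$; Lemma~\ref{lemma:proper-affine} then gives universal rigidity of the subframework. To extend universal rigidity to $(K(n,m),\p,\q)$, take an alternative framework $(\tilde{\p},\tilde{\q})$ in some $\R^D$ with the same edge lengths; its restriction to the subframework's vertices is congruent to the original, so after an ambient congruence we may assume $\tilde{\p}_i = \p'_i$ for $i\in I$ and $\tilde{\q}_j = \q'_j$ for $j\in J$. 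Each remaining $\p_k$ is adjacent in $K(n,m)$ to every vertex of $\q'|_J$, a set of at least $d+1$ affinely independent points spanning $\R^d$; the standard fact that distances to such a set uniquely determine a point in any ambient Euclidean space forces $\tilde{\p}_k = \p_k$, and each remaining $\q_j$ is pinned symmetrically, so $(\tilde{\p},\tilde{\q}) \cong (\p,\q)$. Finally, both $\p$ and $\q$ affinely span $\R^d$ since they contain the general-position subsets $\p'|_I$ and $\q'|_J$ respectively, each of size at least $d+1$.
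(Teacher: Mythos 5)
Your proposal is correct and follows essentially the same route as the paper: pass to the supports of an intersection point of the Veronese convex hulls, apply Theorem~\ref{thm:SVD} to get a super stable subframework whose two sides share an affine span that general position forces to be all of $\R^d$, and then trilaterate the remaining vertices (your direct column-span argument for the equality of the affine spans of $\p'|_I$ and $\q'|_J$ is a nice self-contained substitute for citing that conclusion of Theorem~\ref{thm:SVD}). One small caveat: distances to $d+1$ affinely independent points spanning a $d$-dimensional affine subspace $A \subset \R^D$ determine a point of $\R^D$ only up to rotation about $A$, so the ``standard fact'' should be invoked in the form that the matching distances force $\tilde{\p}_k$ to have distance zero to $A=\R^d$ (since $\p_k$ does), which then pins it uniquely.
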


\begin{proof}  
Since there is no quadric strictly separating $\p'$ and $\q'$, the
convex hulls $\V(\p')$ and $\V(\q')$ must intersect in matrix space,
$\M_d$, and Equation (\ref{eqn:Radon1}) holds with strictly positive
coefficients $\lambda_i$, and $\mu_i$ for some subsets $\p'' \subset
\p'$ and $\q'' \subset \q'$.  By Theorem \ref{thm:SVD}, that
subframework is super stable.  

Also from Theorem~\ref{thm:SVD}
each vertex of $\p''$ must be in the affine span 
of the $\q''$, and so due to 
general position
assumption, the affine span of $\q''$ must then be $d$-dimensional.
Since each of the vertices of $\p$
has at least $d+1$ neighbors in $\q''$, each 
$\p$ has a fixed distance to all the
vertices of $\q''$.  The same argument applies to $\q$.
This \emph{trilateration} argument shows that all of
$(K(n,m),\p,\q)$ is universally rigid. $\Box$
\end{proof}

Note that it may be the case that, even assuming general position, the
framework is not super stable, because all the stress coefficients may
vanish for some vertex.  See the example of Figure \ref{fig:K33} that
shows this possibility, and other examples of universally rigid
frameworks.

\begin{definition} 
We say that a configuration $\p$ in $\R^d$ is in \emph{quadric general
  position} if every $k+1$ of the points of $\V(\p) \subset \M_d$ span
a $k$-dimensional affine subspace for $k = 1, \dots, (d+1)(d+2)/2-1$.
(The vertices of $\V(\p)$ are automatically mapped into a co-dimension
one subspace of $\M_d$, where the last coordinate is one.) Essentially
this means that if there are at least $(d+1)(d+2)/2$ points, then  no
$(d+1)(d+2)/2$ of them lie on a quadric.
\end{definition}

\begin{corollary}\label{cor:quadric-general-position}  
Let $\p$ and $\q$ be configurations in $\R^d$.
Suppose there exists 
subsets of the corresponding configurations $\p'
\subset \p$ and $\q' \subset \q$, such that 
that the points of $(\p',\q')$ are in 
quadric general position in
$\R^d$, and such that there 
is no quadric strictly
separating $\p'$ and $\q'$.  
Then $(K(n,m),\p,\q)$ is super stable,
and $n+m \ge (d+1)(d+2)/2+1$.
Additionally, 
the affine span of $\p$ is the same as the affine span of $\q$,
which must be all of $\R^d$.
\end{corollary}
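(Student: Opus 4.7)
The approach is to apply Theorem~\ref{thm:SVD} directly to the full configuration $(\p,\q)$; this reduces matters to showing Equation~(\ref{eqn:Radon1}) holds with strictly positive coefficients on every vertex of $\p$ and $\q$. Since any quadric strictly separating $\p$ from $\q$ would also strictly separate the subsets $\p'$ and $\q'$, the hypothesis implies that no quadric strictly separates $\p$ from $\q$, so Proposition~\ref{prop:separation} immediately produces a nontrivial non-negative solution of Equation~(\ref{eqn:Radon1}) on the full configuration. The bound $n+m \ge (d+1)(d+2)/2+1$ follows quickly: the support of any such relation is affinely dependent in the $(d+1)(d+2)/2-1$-dimensional ambient affine subspace of $\M_d$, yet QGP of $(\p',\q')$ makes every $(d+1)(d+2)/2$ points of $\V(\p'\cup\q')$ affinely independent, so the support must contain at least $(d+1)(d+2)/2+1$ elements.

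The main obstacle is upgrading the coefficients from non-negative to strictly positive at every vertex, which I would handle via Farkas' lemma together with QGP. Suppose, for contradiction, that some index $k$ has $\lambda_k = 0$ in every relation of type~(\ref{eqn:Radon1}). Farkas then produces a symmetric matrix $A$, a weak separating quadric, with $\langle A, \V(\p_i) \rangle \ge 0$ for all $i$, $\langle A, \V(\q_j) \rangle \le 0$ for all $j$, and strict inequality at $\p_k$. Let $S \subseteq \{1,\dots,|\p'|\}$ and $T \subseteq \{1,\dots,|\q'|\}$ record the indices of vertices of $\p'$ and $\q'$ that lie exactly on the quadric defined by $A$. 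The zero set of $A$ meets the ambient affine subspace of $\M_d$ in a proper $(d+1)(d+2)/2-2$-dimensional affine subspace (the alternative, that $\hat{x}^tA\hat{x}$ is a nonzero constant on $\R^d$, is incompatible with weak separation since it would force all $\V(\p_i)$ and $\V(\q_j)$ to share the same nonzero sign); by QGP, at most $(d+1)(d+2)/2-1$ Veronese images can lie on such a subspace, so $|S|+|T| \le (d+1)(d+2)/2 - 1$. The corresponding Veronese images are therefore affinely independent in $\M_d$, so one can find a symmetric matrix $B$ with $\langle B, \V(\p_i') \rangle > 0$ for $i\in S$ and $\langle B, \V(\q_j') \rangle < 0$ for $j\in T$. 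Then for all sufficiently small $\epsilon > 0$, the matrix $A+\epsilon B$ is strictly positive on every $\V(\p_i')$ and strictly negative on every $\V(\q_j')$, contradicting the hypothesis. The symmetric argument handles every $\mu_l$.

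Summing positive multiples of the resulting witness relations, one per vertex, yields a single equilibrium relation~(\ref{eqn:Radon1}) on $(\p,\q)$ in which every coefficient is strictly positive. Theorem~\ref{thm:SVD} then delivers super stability of $(K(n,m),\p,\q)$ together with the equality of the affine spans of $\p$ and $\q$. To see this common span equals $\R^d$, note that the support of the relation contains at least $(d+1)(d+2)/2+1$ points whose Veronese images, by QGP, affinely span the whole $(d+1)(d+2)/2-1$-dimensional ambient subspace of $\M_d$; were the $\R^d$-span of these points proper, their Veronese images would be confined to a strictly lower-dimensional subspace of $\M_d$, a contradiction.
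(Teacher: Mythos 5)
Your proof is correct, and it reaches the conclusion by a genuinely different route at the one step where all the work lies: upgrading the non-negative Radon coefficients to strictly positive ones on every vertex. The paper argues constructively: it first notes that any non-negative relation supported in the QGP set must have support of size at least $(d+1)(d+2)/2+1$, so that the Veronese images of that support affinely span the entire $(d+1)(d+2)/2-1$-dimensional hyperplane of $\M_d$; it then absorbs each remaining vertex one at a time by adding a sufficiently small affine dependence (chosen non-zero on the new vertex) to both sides of Equation~(\ref{eqn:Radon1}), explicitly enlarging the positive support until it is everything. You instead use LP duality: if some coefficient vanished in every non-negative relation, a theorem of the alternative would yield a weak separating quadric $A$, and quadric general position bounds the number of Veronese images lying on $A$ by $(d+1)(d+2)/2-1$, leaving them affinely independent so that a perturbation $A+\epsilon B$ strictly separates $\p'$ from $\q'$ --- a contradiction; summing the per-vertex witness relations then gives an all-positive relation. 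Both arguments invoke QGP to kill degenerate configurations, but the paper's is constructive while yours buys a cleaner ``strict complementarity'' statement at the cost of Farkas. Two small repairs are needed in your write-up, neither fatal. First, the support-size bound must be run on a relation produced from the non-separability of $\p'$ and $\q'$ themselves, so that its support is guaranteed to lie inside the QGP set $\p'\cup\q'$; a relation for the full configuration could a priori be supported on vertices outside $\p'\cup\q'$, where affine independence is not assumed. Second, when you argue that the zero set of $\langle A,\cdot\rangle$ meets the ambient affine hyperplane of $\M_d$ in a proper affine subspace, you must also exclude the case that $\langle A,\cdot\rangle$ vanishes identically on that hyperplane; this is excluded because it would force $\hat{\x}^tA\hat{\x}\equiv 0$ on $\R^d$ and hence $A=0$, contradicting the strict inequality Farkas gives you at $\p_k$.
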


\begin{proof}  
In matrix space $\M_d$, since the points $\V(\p')$ and $\V(\q')$
cannot be separated by a hyperplane, Equation (\ref{eqn:Radon1}) holds with
non-negative coefficients, not all $0$.  
But since $(\V(\p'),\V(\q')$
is  in general position, then at least $(d+1)(d+2)/2+1$ of the
coefficients are positive, corresponding to subsets $\p'' \subset \p$
and $\q'' \subset \q$.  

This gives us  $n+m \ge  (d+1)(d+2)/2+1$.
Additionally, this lower bound together with
our quadric general position
assumption, forces the combined span of $(\p'',\q'')$ to be all of $\R^d$.

By Theorem \ref{thm:SVD}, the bipartite
graph restricted to $(\p'', \q'')$ is super stable.
Additionally the span of
$\p''$ and the span of $\q''$ must be all of $\R^d$.

Additionally, due to Equation (\ref{eqn:Radon1}) and the quadric
general position assumption, 
the affine span of this $(\V(\p''),\V(\q''))$ in
matrix space must be the full $(d+1)(d+2)/2-1$ dimensions.
Thus, for any additional
point, $\p_i \in \p$, 
there is an affine relation non-zero on
$\V(\p_i)$ and involving the $\V(\p'')$ and  $\V(\q'')$.  
When that relation is added
to both sides of Equation (\ref{eqn:Radon1}), choosing the coefficient
of $\V(\p_i)$ to be positive, and the whole relation small enough, we
enlarge the number of indices, where $\lambda_i > 0$, and $\mu_j > 0$,
until all the coefficients are positive, applying this argument to any
$\q_j \in \q$ as well.  Then again Theorem \ref{thm:SVD} implies that all
of $(K(n,m),\p,\q)$ is super stable.  
$\Box$
\end{proof}

\begin{remark}
The smallest example of Corollary \ref{cor:quadric-general-position}
in the line is $K(2,2)$. 
In the plane, the smallest example is $K(4,3)$.
In $\R^3$, the smallest examples are
$K(7,4)$ and $K(6,5)$.  Section \ref{section:examples} shows some
examples of these.  For the first three examples, 
any equilibrium stress matrix with positive diagonals will be
PSD, which implies directly that they are super stable.  However, for
$K(6,5)$ in $\R^3$, there are always equilibrium 
stress matrices with all positive diagonals but with
negative eigenvalues.
At first it is a little surprising that Theorem~\ref{thm:SVD}
guarantees that there will always be some such PSD stress matrix.
\end{remark}

\begin{remark}We note that Theorem~\ref{thm:SVD} is also gives us an alternative
proof for Theorem 6 of \cite{Bolker-Roth}, under the restriction that
the coefficients $\lambda_1, \dots, \lambda_n$ and $\mu_1, \dots,
\mu_m$ of Equation (\ref{eqn:Radon1}) are positive. This is
because the proof of our Theorem~\ref{thm:SVD} provides a construction
of an equilibrium stress matrix for $(K(n,m),\p,\q)$ with these
coefficients on its diagonal.

Indeed, by slightly generalizing this construction, we can produce
\emph{all} of the equilibrium stress matrices for $(K(n,m),\p,\q)$ with
all positive diagonals. To do this, all we need to do is replace
Equation (\ref{eqn:SVD2}) with
\[
\Psi=
\begin{bmatrix}
~I^{d+1} &  & -I^{d+1} &  \\
& I^{n-d-1} & & C \\
-I^{d+1} &   & ~I^{d+1} &  \\
 & C^t  &  & I^{m-d-1}
\end{bmatrix},
\]
where
$C$
is an arbitrary diagonal
$(n-d-1)$-by-$(m-d-1)$ matrix, and
also we need to allow for any $U$, $V_m$ and $V_n$ such that
\[
\hat{P}\Lambda^{1/2} = U S_n V_n^t \,\,\,\,\text{and}\,\,\,\, \hat{Q}M^{1/2} = U S_m V_m^t
\]

Additionally, whenever any of the diagonal entries in $C$ have a magnitude
equal to $1$ the rank of $\Psi$ will drop, and
whenever all of diagonal entries of $C$ have magnitudes less
than or equal to
$1$, then $\Psi$ will be PSD.

It is less clear if we can use the ideas in this paper to 
prove Theorem 6 of~\cite{Bolker-Roth}, when 
the coefficients
$\lambda_1, \dots, \lambda_n$ and $\mu_1, \dots, \mu_m$
include negative values.
\end{remark}

It is easy to see how our necessity result of Theorem~\ref{thm:main}
fits in with the ideas of this section. In particular we have the following
Proposition,
which is essentially Lemma 5 of~\cite{Bolker-Roth}.
\begin{proposition}
\label{prop:brn}
Suppose that $\Omega$ is an equilibrium stress matrix for\\
$(K(n,m),\p,\q)$, where $\Omega$ is of the form
\[
\begin{bmatrix}
\Lambda & B\\
B^t & M
\end{bmatrix},
\]
where $\Lambda$ and $M$ are diagonal matrices of size $n$ and $m$ respectively.  Then Equation (\ref{eqn:Radon}) holds with this  $\Lambda$ and $M$.
\end{proposition}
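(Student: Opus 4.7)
The plan is to recognize that the hypotheses package cleanly into a single matrix identity and then unfold it in block form. Specifically, the equilibrium condition~(\ref{eqn:equilibrium}) at each vertex is the statement that $[P,Q]\,\Omega = 0$, where $[P,Q]$ is the $d$-by-$(n+m)$ configuration matrix whose columns are $\p_1,\dots,\p_n,\q_1,\dots,\q_m$. Combined with the fact that, by construction of a stress matrix, the row sums of $\Omega$ vanish (so $\mathbf{1}^t\Omega = 0$), we obtain the augmented identity
\[
[\hat{P},\hat{Q}]\,\Omega \;=\; 0.
\]

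The second step is to substitute the block form of $\Omega$ into this identity. A direct block multiplication yields the two equations
\[
\hat{P}\Lambda + \hat{Q}B^t = 0, \qquad \hat{P}B + \hat{Q}M = 0.
\]
From the first, $\hat{P}\Lambda = -\hat{Q}B^t$. Right-multiplying by $\hat{P}^t$ and then invoking the second equation (together with the symmetry of the diagonal matrix $M$) gives
\[
\hat{P}\Lambda\hat{P}^t \;=\; -\hat{Q}B^t\hat{P}^t \;=\; -\hat{Q}\,(\hat{P}B)^t \;=\; -\hat{Q}(-\hat{Q}M)^t \;=\; \hat{Q}M\hat{Q}^t,
\]
which is precisely Equation~(\ref{eqn:Radon}).

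There is no real obstacle here; the content of the proposition is entirely algebraic once one notices that the row-sum-zero condition lifts the equilibrium identity from $[P,Q]$ to $[\hat{P},\hat{Q}]$. The only thing to be careful about is not conflating the equilibrium condition (which only forces $[P,Q]\,\Omega=0$) with the full augmented identity; the latter requires the defining property of the stress matrix that makes the diagonals of $\Lambda$ and $M$ the appropriate row sums of the off-diagonal block.
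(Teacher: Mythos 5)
Your proof is correct and follows essentially the same route as the paper: both pass to the augmented identity $[\hat{P},\hat{Q}]\Omega=0$, read off the two block equations, and conclude by transposing/symmetry. The only difference is that you spell out why the equilibrium condition together with the zero row sums yields the augmented identity, which the paper leaves implicit.
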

\begin{proof}
Since $\Omega$ is an equilibrium stress matrix we have
\[
[\hat{P},\hat{Q}]
\begin{bmatrix}
\Lambda & B\\
B^t & M
\end{bmatrix}
=0
\]
and so we have
$\hat{P} \Lambda = -\hat{Q}B^t$
and
$\hat{Q} M = -\hat{P}B$.
This gives us
$\hat{P} \Lambda \hat{P}^t= -\hat{Q}B^t\hat{P}^t$
and
$\hat{Q} M \hat{Q}^t= -\hat{P}B\hat{Q}^t$.
Since these are symmetric matrices, this gives us
$\hat{P} \Lambda \hat{P}^t=
\hat{Q} M \hat{Q}^t$, which is Equation (\ref{eqn:Radon}).
$\Box$
\end{proof}

Thus, when $(K(n,m),\p,\q)$ is universally rigid,
from Theorem~\ref{thm:Alfakih-stress}
it must have
a non-zero equilibrium stress matrix with 
non-negative (and not all zero)  $\Lambda$ and $M$.
Then Propositions~\ref{prop:brn} and~\ref{prop:separation} imply that
$\p$ and $\q$ cannot be strictly separated by a quadric, which gives us
the result of our Theorem~\ref{thm:main}.

\newpage

\section{Examples} \label{section:examples}

Figure \ref{fig:min} shows examples of bipartite frameworks that are super stable in quadric general position with the minimal number of vertices.  Dashed edges have a positive equilibrium stress, and for solid edges the equilibrium stress is negative.  These represent cables and struts, respectively, where cables cannot increase in length, and struts cannot decrease in length.  These examples have symmetry, and for the calculation of the separating quadric or conic, this allows us to only consider symmetric quadrics or conics, since we can average those that separate the two partitions to get one that is symmetric.  Note that the $K(6,5)$ example is such that it is in quadric general position, but since there are several sets of  three vertices that collinear, it is not in general position.
\begin{figure}[here]
    \begin{center}
        \includegraphics[width=0.9\textwidth]{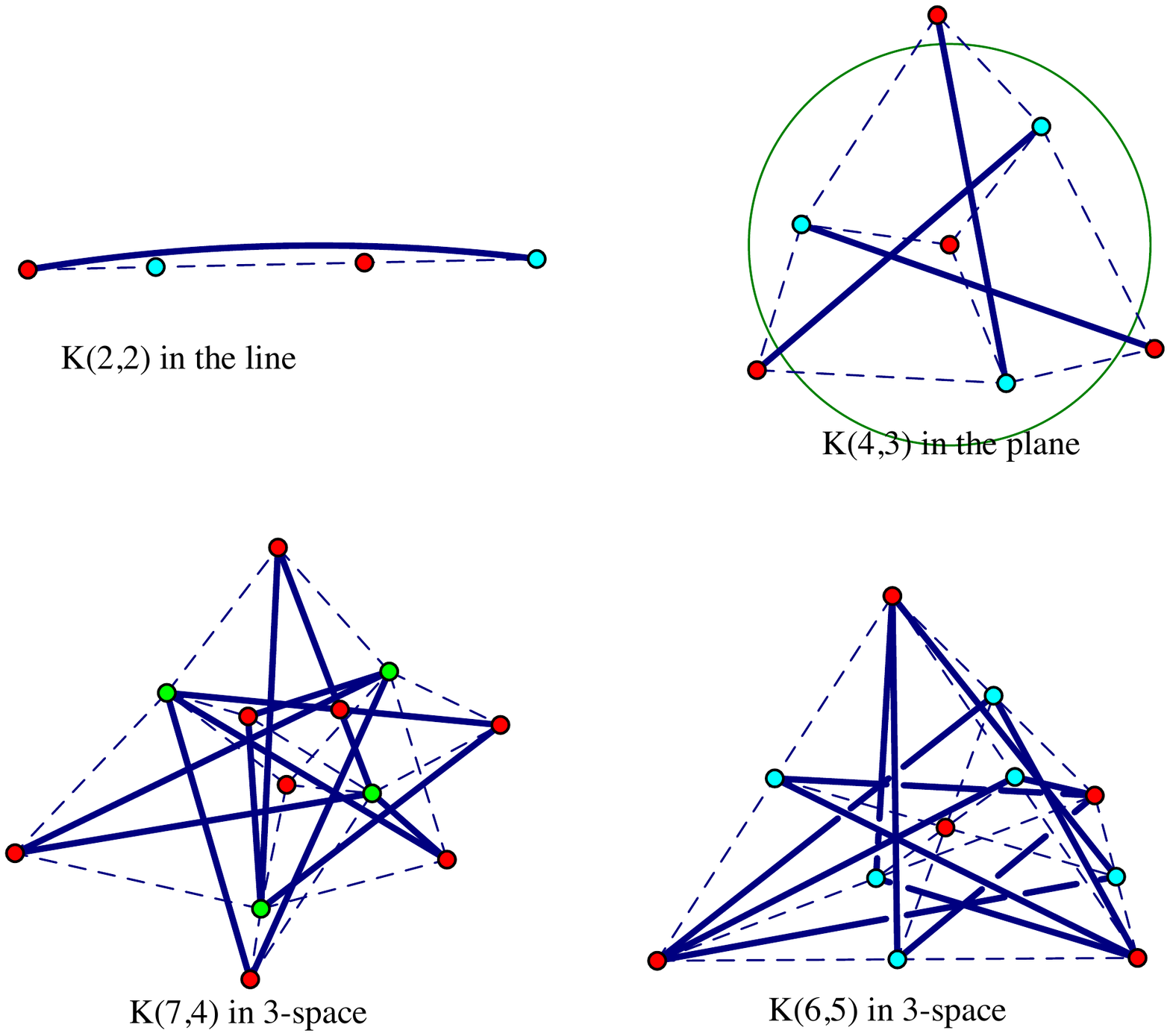}
        \end{center}
        \caption{ }\label{fig:min}
    \end{figure}
 \clearpage
In Figure \ref{fig:K33} the top examples are frameworks of the graph
$K(3,3)$ in the plane.  The top left example is super stable.  It lies
on a conic, which corresponds to a co-dimension two subspace of $\M_3$.
It has equilibrium stress which is PSD since it cannot be
separated by a conic.  See also \cite{connelly-energy}.  The top right
example is not universally rigid, even though the vertices lie on a
conic, since the partitions can be separated by a conic consisting of
two lines, as shown.  The bottom example is the same as the top left
example, except a red vertex is inserted and attached to the blue
vertices forming a $K(4,3)$. The stress on the edges on the central
vertex is zero, but the entire configuration is in general position in
the plane.  So Corollary \ref{cor:general-position} applies and it is
universally rigid, but not super stable.
\begin{figure}[here]
    \begin{center}
        \includegraphics[width=0.8\textwidth]{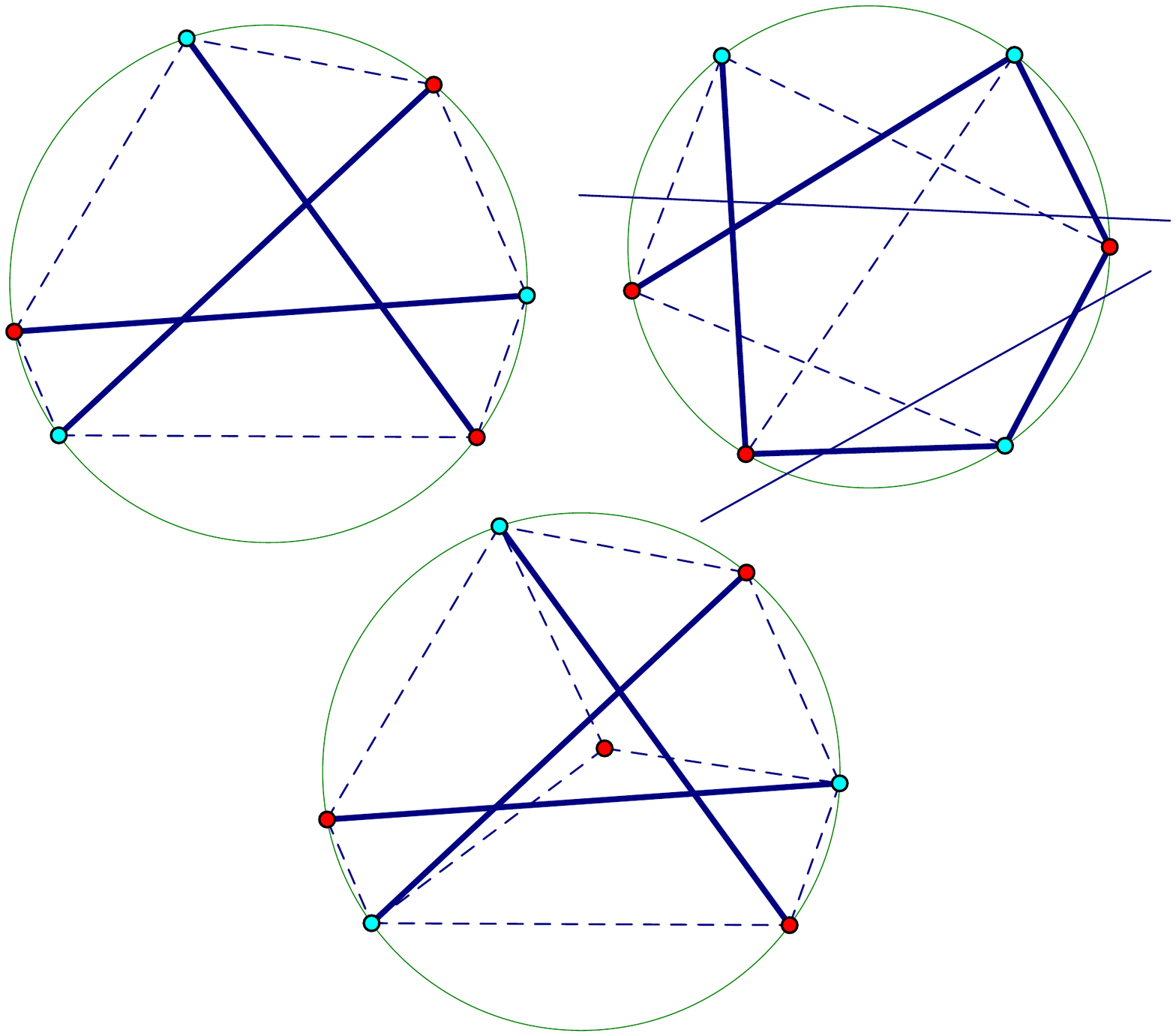}
        \end{center}
        \caption{ }\label{fig:K33}
    \end{figure}
    
\clearpage 
    
\section{Primitive Cores } \label{section:cores}

\begin{definition}Following \cite{Eckhoff}, (Theorem 9.1) we say that a partition\\ $(\V(\p), \V(\q))$ in $\M^d$ is \emph{primitive} if the convex hull of $\V(\p)$ intersects the convex hull of  $\V(\q)$ and no proper subset of $(\V(\p), \V(\q))$ has this property.  
\end{definition}

From our discussion above and \cite{Eckhoff} it is clear that if the convex hull of  $\V(\p)$ intersects the convex hull of $\V(\q)$, there are subsets $\p' \subset \p$ and $\q' \subset \q$ such that the convex hull of $\V(\p')$ intersects the convex hull of  $\V(\q')$ in their relative interiors with a minimal number of vertices.  We call the subframework $(K(n',m'), \p',\q')$ a \emph{primitive core} of  $(K(n,m), \p,\q)$.   
Here we list all the primitive cores of complete bipartite graphs for dimensions one, two, three.  It is easy to see how to extend this higher dimensions.  

Note that when $K(n,m)$ is a primitive core with affine span of
dimension $d$, then
$n \ge d+1$,  $m \ge d+1$, and 
$n+m-2$ is the dimension of the affine span of
$(\V(\p), \V(\q))$ in $\M^d$.
Since
$(d+1)(d+2)/2-1$ is the dimension of the affine span of the image of
$\V(\R^d)$ in $\M_d$, the vertices of $(\p,\q)$ lie in the
intersection of $(d+1)(d+2)/2+1-(n+m)$ quadrics, corresponding to
hyperplanes in $\M_d$.  Furthermore, for a primitive core,
$(K(n,m),\p,\q)$ is super stable.

\subsection{Dimension one}\label{subsection:core-one}

There is only one primitive core given by $K(2,2)$, where the partitions alternate along the line, as in Figure \ref{fig:min}. This is the main result of \cite{Jordan-universal-line}.

\subsection{Dimension two}\label{subsection:core-two}

When the core vertices are in quadric general position there is only $K(4,3)$ as in Figure \ref{fig:min}.  Here the dimension of the affine span of $(\V(\p), \V(\q))$ is $5$-dimensional.

When the Veronese images of the 
core vertices $(\V(\p), \V(\q))$ have a $4$-dimensional affine span, then there is one more example, $K(3,3)$ as in Figure~\ref{fig:K33}.  The vertices of this $K(3,3)$ lie on a single conic in the plane.  This particular example was described in \cite{connelly-energy} as well, for example.

\subsection{Dimension three}\label{subsection:core-three}

When the core vertices are in quadric general position, there are two examples, $K(6,5)$ and $K(7,4)$ as in Figure \ref{fig:min}.  Here the the core vertices $(\V(\p), \V(\q))$ have a $9$-dimensional affine span in $\M_3$.

When the core vertices  $(\V(\p), \V(\q))$ have an $8$-dimensional affine span in $\M_3$, then there are two examples, $K(5,5)$ and $K(6,4)$.  Figure \ref{fig:K456} shows an example for $K(6,4)$ and $K(5,5)$ lying on a sphere.  The configuration for $K(6,4)$ is obtained by taking the green vertices as the vertices of a regular tetrahedron, and the red vertices as the midpoints of the $6$ edges rescaled out to be on the circumsphere of the tetrahedron.  The configuration for $K(5,5)$ is obtained by taking the red and green vertices as a regular octahedron, but with the red vertices translated up and the green vertices translated down.  Then another red and green vertex is added down and up, respectively, to avoid separating the two partitions.
\begin{figure}[here]
    \begin{center}
        \includegraphics[width=0.6\textwidth]{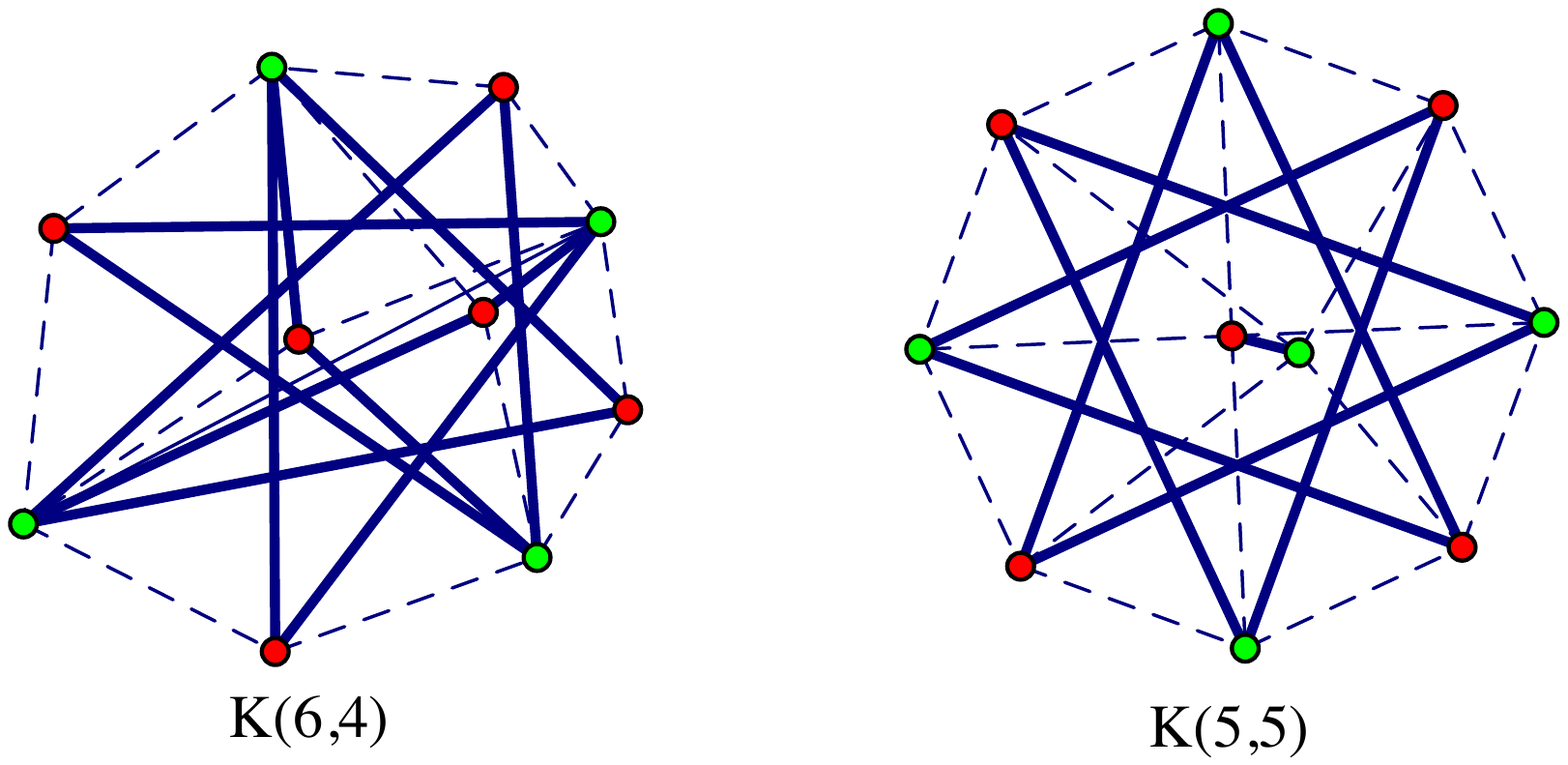}
        \end{center}
        \caption{ }\label{fig:K456}
    \end{figure}

When the core vertices  $(\V(\p), \V(\q))$ have a $7$-dimensional affine span in $\M_3$, then there is an example, $K(5,4)$.  (One can use the analysis of Theorem \ref{thm:SVD} to construct examples in this range.)  This configuration lies on the intersection of two quadrics.

When the core vertices $(\V(\p), \V(\q))$ have a $6$-dimensional affine span in $\M_3$,  then there is an example, $K(4,4)$, which is the intersection of three quadrics.  One example is a cube with its long diagonal as in Figure \ref{fig:cube}.  This was also shown in \cite{connelly-energy}.
\begin{figure}[here]
    \begin{center}
        \includegraphics[width=0.4\textwidth]{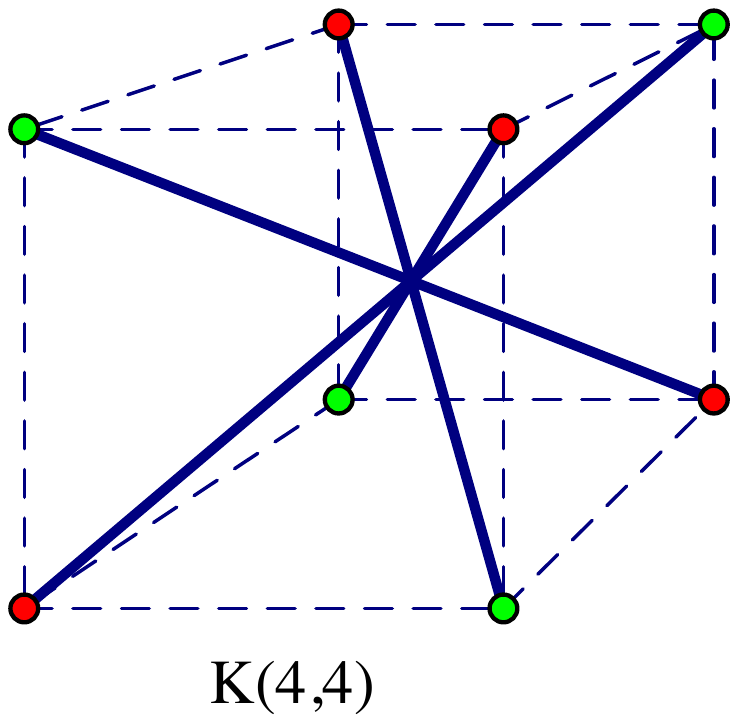}
        \end{center}
        \caption{ }\label{fig:cube}
    \end{figure}

\section{Coning and Projection-Section } \label{section:coning-projection}

Here we describe some general tools that are interesting in their own right
and that 
we will use below in 
Section~\ref{section:algorithm}.  See also \cite{Connelly-Whiteley-coning} for similar results in the context of generic global rigidity.

\subsection{Coning}\label{subsection:coning}

\begin{definition} A \emph{coned graph} is one where one of the vertices is connected to all the others.
\end{definition}

If a configuration for a complete bipartite graph has coincident
vertices from different partitions, we can identify those two vertices
as one, and we effectively have a coned graph. Here we first consider
a general graph, not just a bipartite graph, that has a distinguished
vertex $\p_0$ that is connected to all the vertices of a graph $G$.
We denote this framework as $\p_0*(G,\p)$.  We also assume that all
the vertices of $\p$ are distinct from $\p_0$.
The following is immediate since one can slide the
vertices of $G$ on the lines from $\p_0$ while preserving universal
and dimensional rigidity.
\begin{lemma}\label{lemma:slide}Suppose that $\p_0*(G,\p)$ and $\p_0*(G,\q)$ are two coned frameworks.
For simplicity, we assume $\p_0=0$,
the origin.  
Suppose  that for each edge $\{i,j\}$ of $G$, 
\begin{equation*}
\frac{|\p_i\cdot \p_j|}{|\p_i| |\p_j|} =\frac{|\q_i\cdot \q_j|}{|\q_i| |\q_j|}.
\end{equation*}
Then $\p_0*(G,\p)$ is universally rigid if and only if  $\p_0*(G,\q)$ is universally rigid, and $\p_0*(G,\p)$ is dimensionally rigid if and only if  $\p_0*(G,\q)$ is dimensionally rigid.
\end{lemma}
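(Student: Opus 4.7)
The plan is to exhibit an explicit rigidity-preserving bijection between the equivalent frameworks of the two cone graphs, realizing the informal idea that sliding each non-cone vertex along its line through $\p_0$ does not change the rigidity type. With $\p_0=\0$ as in the statement, any framework equivalent to $\p_0*(G,\p)$ may, after translating its cone vertex to the origin, be written as a tuple $(\0,\r_1,\dots,\r_n)$; the cone bars force $|\r_i|=|\p_i|$ for every $i$, and then the polarization identity $|\r_i-\r_j|^2=|\r_i|^2+|\r_j|^2-2\r_i\cdot\r_j$ applied to each bar of $G$ reduces the remaining edge constraints to $\r_i\cdot\r_j=\p_i\cdot\p_j$ for every $\{i,j\}\in E(G)$.

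The heart of the argument is the claim that, for any choice of nonzero scalars $t_1,\dots,t_n$, the vertex-wise rescaling $T_t\colon\r_i\mapsto t_i\r_i$ (fixing the origin) induces a bijection from the set of equivalent frameworks of $\p_0*(G,\p)$ onto the set of equivalent frameworks of $\p_0*(G,t\cdot\p)$, where $(t\cdot\p)_i:=t_i\p_i$. This is immediate from the identities $|t_i\r_i|=|t_i\p_i|$ and $(t_i\r_i)\cdot(t_j\r_j)=(t_i\p_i)\cdot(t_j\p_j)$ combined with the reduction of the previous paragraph. Since $T_t$ is linear (hence commutes with any orthogonal motion fixing $\p_0$) and has an inverse $T_{1/t}$ of the same form, it carries congruence classes to congruence classes and preserves the affine dimension of every configuration. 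Consequently universal rigidity and dimensional rigidity of $\p_0*(G,\p)$ coincide with those of $\p_0*(G,t\cdot\p)$.

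To finish, I would take $t_i:=\sigma_i|\q_i|/|\p_i|$ for a suitable choice of signs $\sigma_i\in\{\pm 1\}$, so that the cone bars of $\p_0*(G,t\cdot\p)$ match those of $\p_0*(G,\q)$ in length. The absolute-cosine hypothesis then yields $|(t\cdot\p)_i\cdot(t\cdot\p)_j|=|\q_i\cdot\q_j|$ for every $\{i,j\}\in E(G)$, and the signs $\sigma_i$ serve to align the actual signs of these inner products edge by edge. Once they agree, $\p_0*(G,t\cdot\p)$ and $\p_0*(G,\q)$ have identical bar lengths and are thus equivalent frameworks of the same cone graph, automatically sharing both universal and dimensional rigidity type; pulling this back through the sliding bijection gives the desired iff statements.

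The main delicate point is the combinatorial consistency of the signs $\sigma_i$: since reflecting a vertex through the origin via $t_i=-1$ flips the sign of $(t\cdot\p)_i\cdot(t\cdot\p)_j$ on every edge incident to $i$, producing globally coherent $\sigma_i$ is a parity condition around the cycles of $G$. This is the one place where the hypothesis must be unpacked carefully; in the coning applications of Lemma~\ref{lemma:slide} that follow, the structure of the underlying bipartite graph makes the required consistency transparent.
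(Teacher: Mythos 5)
Your reduction of the problem to dot products (cone bars give $|\r_i|=|\p_i|$, then polarization turns the remaining bars into $\r_i\cdot\r_j=\p_i\cdot\p_j$) and the vertex-wise rescaling bijection $T_t$ are correct, and they are exactly the careful formalization of what the paper leaves as a one-line remark (``one can slide the vertices of $G$ on the lines from $\p_0$''); the paper supplies no more detail than that, so your write-up is genuinely more complete on this part. The gap is in the last step, and you have correctly located it but incorrectly dismissed it. The signs $\sigma_i$ you need must satisfy $\sigma_i\sigma_j=\mathrm{sign}(\p_i\cdot\p_j)\,\mathrm{sign}(\q_i\cdot\q_j)$ on every edge with nonzero inner product, and such $\sigma$ exist if and only if these edge discrepancies multiply to $+1$ around every cycle of $G$. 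The hypothesis of the lemma, which only constrains $|\p_i\cdot\p_j|$, does not force this. Concretely, take $G$ a triangle with $\p_1,\p_2,\p_3$ unit vectors pairwise at $60^\circ$ (all inner products $1/2$) and $\q_1,\q_2,\q_3$ unit vectors with Gram matrix having off-diagonal entries $1/2,1/2,-1/2$ (this matrix is PSD of rank $2$, so such $\q$ exist in the plane). All absolute cosines agree, but the product of sign discrepancies around the triangle is $-1$, so no choice of $\sigma_i$ works, and indeed $\p_0*(G,t\cdot\p)$ and $\p_0*(G,\q)$ do not have the same bar lengths for any $t$: the two coned frameworks are genuinely inequivalent. (The lemma's conclusion happens to survive in this example only because the cone over a triangle is $K_4$, which is always universally rigid.)

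Your closing claim that bipartiteness of the underlying graph ``makes the required consistency transparent'' is not right: bipartiteness gives even cycles, but an even cycle can still carry a $-1$ product of discrepancies (e.g.\ a $4$-cycle with sign pattern $+,+,+,-$). What actually saves the lemma in every place the paper uses it is that there $\q_i=t_i\p_i$ with $t_i>0$ (the points are slid along \emph{rays} from $\p_0$ in the algorithm of Section~8), so all signs match automatically and your argument closes with $\sigma_i\equiv 1$. You should either add that hypothesis (or the cycle-consistency condition $\prod\epsilon_{ij}=+1$) to your statement of what you have proved, or find a separate argument for the sign-inconsistent case; as written, your proof does not establish the lemma in the generality in which it is stated.
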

Figure \ref{fig:slide} shows this for a quadrilateral in the plane
that is a cone over a $K(2,2)$ graph on a line.  The
cable-strut designation is shown as well.  The stress values on the 
``cone edges''
over the collinear $K(2,2)$ are zero.
\begin{figure}[here]
    \begin{center}
        \includegraphics[width=0.5\textwidth]{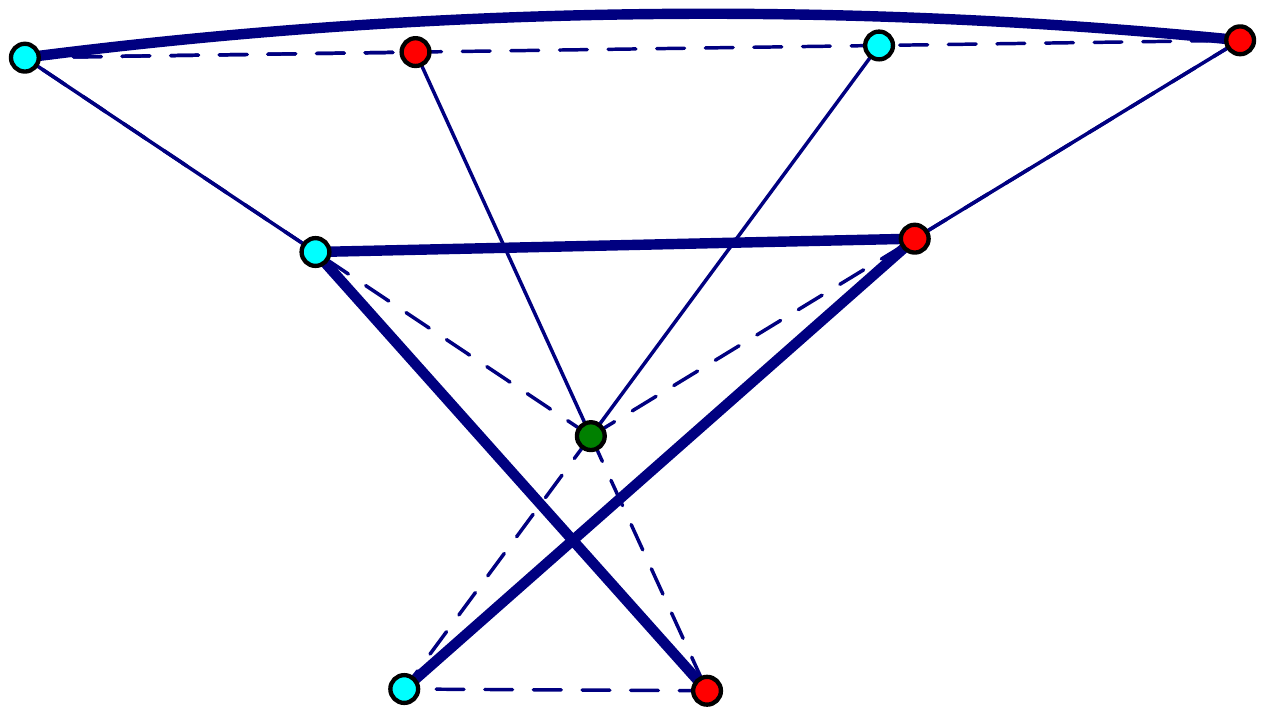}
        \end{center}
        \caption{ }\label{fig:slide}
    \end{figure}

The following is a general result relating universal and dimensional rigidity to their coned frameworks.
\begin{proposition}\label{prop:cone} 
Suppose that 
$\p$ is in
$\R^d$
and the cone point $\p_0 \in \R^{d+1} - \R^d$.
Then the framework 
$\p_0*(G,\p)$ is dimensionally rigid 
if and only if $(G,\p)$ is dimensionally rigid,
and if $(G,\p)$ is universally rigid, then $\p_0*(G,\p)$ is
universally rigid.
\end{proposition}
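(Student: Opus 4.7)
The plan is to prove both claims, handling the easy direction uniformly and then tackling the harder converse for dimensional rigidity separately.

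For the easy direction, which covers both the dimensional-rigid implication and the universal-rigid implication, I would take an arbitrary coned framework $\q_0 * (G, \q)$ in $\R^D$ with matching edge lengths and restrict to the sub-framework $(G, \q)$, which automatically shares $G$-edge lengths with $(G, \p)$. For dimensional rigidity, the affine span of $\q$ is then at most $d' := \dim(\mathrm{aff}\,\p)$, and appending $\q_0$ raises the span by at most one, matching the $d'+1$ affine span of $\{\p_0\} \cup \p$ (using $\p_0 \notin \R^d$ to ensure $\p_0$ lies outside the affine span of $\p$). For universal rigidity, $(G, \q)$ is congruent to $(G, \p)$ via some rigid motion $\phi$; after applying $\phi$ we may assume $\q = \phi(\p)$, and then $\q_0$ lies in the intersection of the spheres of radius $L_i = |\p_0 - \p_i|$ centered at $\q_i$, an intersection invariant under rotations of the ambient space fixing the affine span of $\q$. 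Since $\phi(\p_0)$ also lies in this intersection, composing $\phi$ with a suitable such rotation yields a rigid motion $\p_0 * (G, \p) \to \q_0 * (G, \q)$.

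For the converse direction of the dimensional rigidity statement, I argue by contrapositive. Suppose $(G, \q)$ in $\R^D$ has the same edge lengths as $(G, \p)$ and affine span $d'' > d'$; I must exhibit a coned realization $\q_0 * (G, \q')$ with matching edge and cone lengths whose affine span strictly exceeds $d' + 1$. The plan is to reformulate coned realizations (translated so the cone point sits at the origin) as families of vectors $v_i$ satisfying $|v_i| = L_i$ and $|v_i - v_j| = |\p_i - \p_j|$ for $(i,j) \in E(G)$. Their $N \times N$ Gram matrices $B$ are PSD with fixed diagonal $L_i^2$ and fixed $G$-edge entries $(L_i^2 + L_j^2 - |\p_i - \p_j|^2)/2$, and any PSD completion of rank $r$ factors as $V^T V$ to produce a coned realization of affine span $r$. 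Since the original $\p$-realization supplies a PSD completion $B_\p$ of rank exactly $d' + 1$, it suffices to produce a PSD completion $B'$ of strictly larger rank.

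The main obstacle is converting the larger-span realization $(G, \q)$ into such a higher-rank $B'$: a naive attempt to place $\q_0$ at distances $L_i$ from the given $\q_i$'s may violate the Schoenberg/Cayley-Menger condition on the augmented distance matrix. I plan to circumvent this by first applying Lemma~\ref{lemma:slide} to normalize all cone edges to length $1$, reducing the PSD completion problem to the clean spherical case where the vertices sit on a unit sphere around the cone point. Then, leveraging the freedom to lift $(G, \q)$ into higher ambient dimension, I would run a convex-combination argument anchored at $B_\p$: perturbing its non-edge entries in a direction reflecting the larger-span flex of the base, a small positive step stays PSD while strictly increasing rank above $d' + 1$. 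The most delicate technical point is verifying that the local geometry of the PSD cone at $B_\p$ admits such a rank-increasing perturbation whenever the base admits a higher-span flex, which I expect to be the crux of the argument.
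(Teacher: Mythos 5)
Your easy direction is fine and matches what the paper dismisses as obvious. The problem is the hard direction: base not dimensionally rigid $\Rightarrow$ cone not dimensionally rigid. Your Gram-matrix framing is a correct restatement of the problem --- coned realizations with cone point at the origin are exactly the PSD completions $B$ with $B_{ii}=L_i^2$ and $B_{ij}$ prescribed on edges, and you need one of rank $>d'+1$ --- but the step you defer (``a small positive step stays PSD while strictly increasing rank'') is precisely the entire content of the proposition, and you give no argument for it. The difficulty is that the hypothesis hands you a point of a \emph{different} spectrahedron: an alternative realization $(G,\q)$ of the base with larger span gives a high-rank point of the set of PSD matrices satisfying only the edge constraints $C_{ii}+C_{jj}-2C_{ij}=\ell_{ij}^2$, whereas you need a high-rank point of the slice of that set cut out by the additional diagonal constraints $C_{ii}=L_i^2$. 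A slice of a spectrahedron can perfectly well have strictly smaller maximal rank than the ambient spectrahedron, so there is no generic ``local geometry of the PSD cone'' fact to invoke; moreover $(G,\q)$ need not be anywhere near $(G,\p)$ (dimensional rigidity concerns discrete alternative realizations, not infinitesimal flexes), so it does not supply a perturbation direction at $B_\p$ at all. Normalizing the cone edges to length $1$ via Lemma~\ref{lemma:slide} is legitimate but does not touch this obstruction.

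The paper's actual argument supplies the missing idea by a genuinely geometric route. From the higher-span realization $(G,\q)$ of the base it first builds a ``prism'' framework $(H,(\p,\q))$ by doubling $(G,\p)$ with a parallel translate one unit off the hyperplane and filling in each edge's quadruple $\p_i,\p_j,\q_i,\q_j$ with all six bars; this $H$ inherits the failure of dimensional rigidity. It then uses the projective invariance of dimensional rigidity (Section 13 of \cite{Connelly-Gortler-iterated}) to move the common point at infinity of the parallel rungs to the finite point $\p_0$, and an angle--side--angle argument on each edge's planar quadrilateral to show that in \emph{any} equivalent realization of $H$ the per-edge apex points exist and, by connectedness of $G$, all coincide --- which manufactures the cone point at the correct distances in the higher-span realization. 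Lemma~\ref{lemma:slide} then transports the conclusion back to $\p_0*(G,\p)$, and a separate short argument handles disconnected $G$ (which your sketch also omits). Without something playing the role of this apex/projective construction, your proposal does not close.
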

\begin{proof}The  ``if" statements are obvious.

For the other direction, Suppose $(G,\p)$ is 
not dimensionally rigid. 
For now, we will assume that $G$ is connected.
Without loss of generality,
we can choose $d$ so that the span of $\p$ is full within 
$\R^d$.

In $\R^{d+1}$ construct a parallel framework $(G,\q)$
by translating each vertex $\p_i$ by one unit perpendicular to the
$\R^{d}$ hyperplane to get $\q_i$.  Then for each edge $\{i,j\}$ of
$G$, construct the bars connecting all the pairs of vertices $\p_i,
\p_j,\q_i, \q_j$ constructing a new framework in $\R^{d+1}$, $(H,
(\p,\q))$, as in Figure~\ref{fig:parallel}.
\begin{figure}[here]
    \begin{center}
        \includegraphics[width=0.5\textwidth]{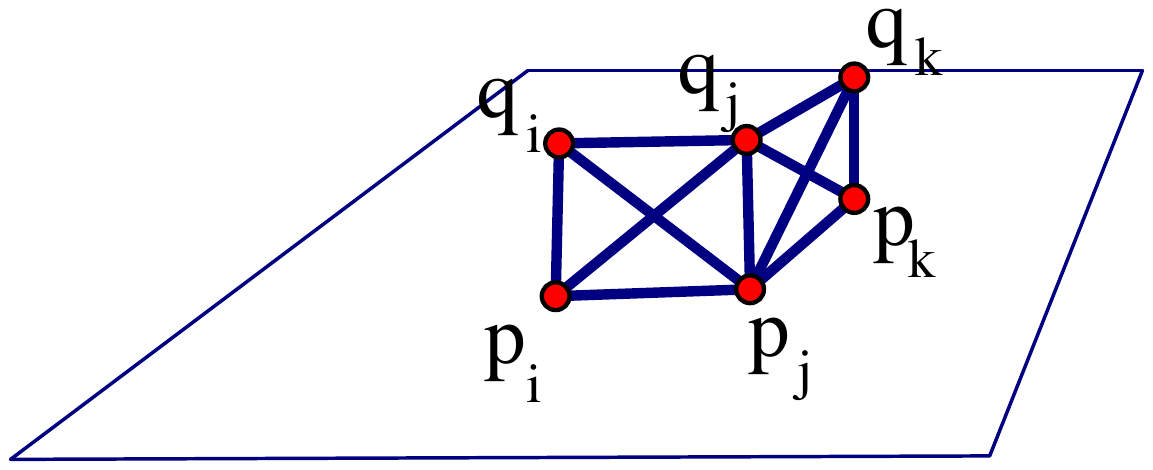}
        \end{center}
        \caption{ }\label{fig:parallel}
    \end{figure}
    
It is clear that 
if  $(G,\p)$ is 
not dimensionally rigid 
then
$(H, (\p,\q))$ is 
not dimensionally rigid.  Then by Section
13 of \cite{Connelly-Gortler-iterated}, any non-singular projective
image of $(H, (\p,\q))$,
is 
not dimensionally rigid as well.
But the lines through $\p_i$ and $\q_i$ are all parallel and so in the
projective image all these lines intersect at 
a ``meeting''  point in
$\R^{d+1}$.

For any chosen point $\p_0$, we can find a projective transformation
that leaves $\R^d$ fixed and such that the image of $(H, (\p,\q))$
has its
meeting point at $\p_0$. Let us 
denote this framework as
$(H, (\p,\q'))$.
The point $\p_0$ is on each of
the lines through $\p_i, \q'_i$ for all $i$.

Each edge $\{i,j\}$ of $G$ corresponds to a $4$-vertex universally
rigid planar  framework on the vertices $\p_i, \p_j,
\q'_i, \q'_j$ in the graph $H$. 
Each such $4$-vertex framework
determines a unique ``apex point'',
say, using 
the angle-side-angle theorem in elementary geometry.
This also determines the distance from the apex point to $\p_i$ and
to $\q'_i$ along the line spanned by  $\p_i$ and $\q'_i$.
See
Figure~\ref{fig:ASA}.
In the framework,
$(H, (\p,\q'))$ all of these
apicies coincide at $\p_0$.

 \begin{figure}[here]
    \begin{center}
        \includegraphics[width=0.3\textwidth]{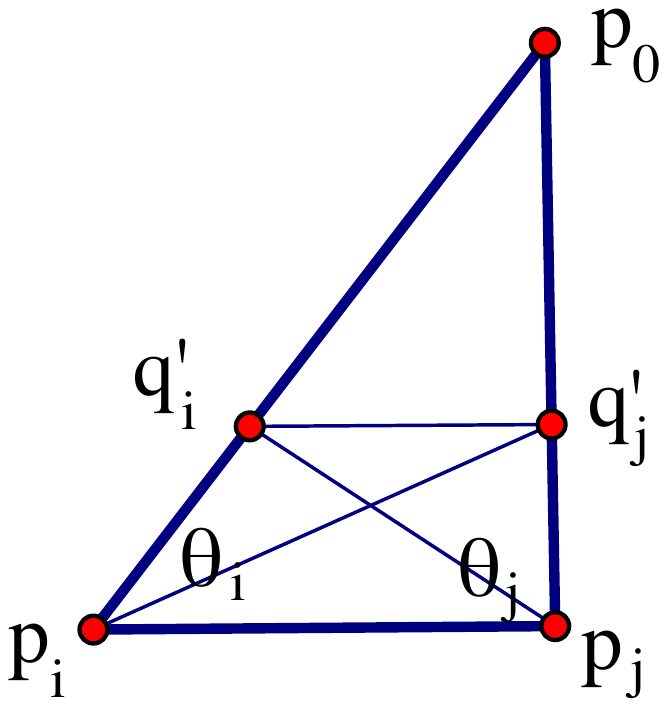}
        \end{center}
        \caption{ }\label{fig:ASA}
    \end{figure}

Suppose there is 
a  second framework of $H$ with the same edge lengths as
$(H, (\p,\q'))$ but with an affine span of dimension greater than
$d+1$. Then for each $4$-vertex set
in this second framework, we can
compute its apex point. Since we have assumed that $G$ is connected,
these all must agree at a common meeting point.
This means that we can find a second framework that has the same edge lengths
as the coned framework
$\p_0*(H,(\p,\q'))$, but with an affine span greater than $d+1$.

From  Lemma \ref{lemma:slide}, 
this means that we can find 
a second framework that has the same edge lengths
as the coned framework
$\p_0*(G,\p)$  
but with an affine span greater than $d+1,$
making it not dimensionally rigid.

Finally, we can look at the case that $(G,\p)$ in $\R^d$ has multiple connected
components. 
Suppose one of the components is not dimensionally rigid.
We have just shown that
the coned framework over that component
is not dimensionally rigid as well.
This can be used to certify that
$\p_0*(G,\p)$  is not dimensionally rigid.
Suppose, instead that 
each of the components is dimensionally rigid 
but $(G,\p)$ is not. This means that we can 
increase the dimension span by simply rigidly moving one of the
components into a larger space. The same will be true for
$\p_0*(G,\p)$ using an appropriate rotation of that component about $\p_0$
into some larger space.
$\Box$
\end{proof}

In the setting of the above proposition,
it is not true that if $\p_0*(G,\p)$ is universally rigid,
then $(G,\p)$ is universally rigid.  In particular, the proof of the 
proposition above relies on the invariance of dimensional rigidity
with respect to projective transformations. This invariance does
not hold for universal rigidity as having edge directions 
on a conic at infinity is not invariant with respect to projective
transformations!
Following
the example of Figure~8 in~\cite{Connelly-Gortler-iterated}, the
ladder in $\R^2$ as in Figure \ref{fig:ladders} is not universally
rigid, since it has an affine flex, but the cone over the ladder in
$\R^3$ is universally rigid, since it has a section, the orchard
ladder which is universally rigid.

  \begin{figure}[here]
    \begin{center}
        \includegraphics[width=0.5\textwidth]{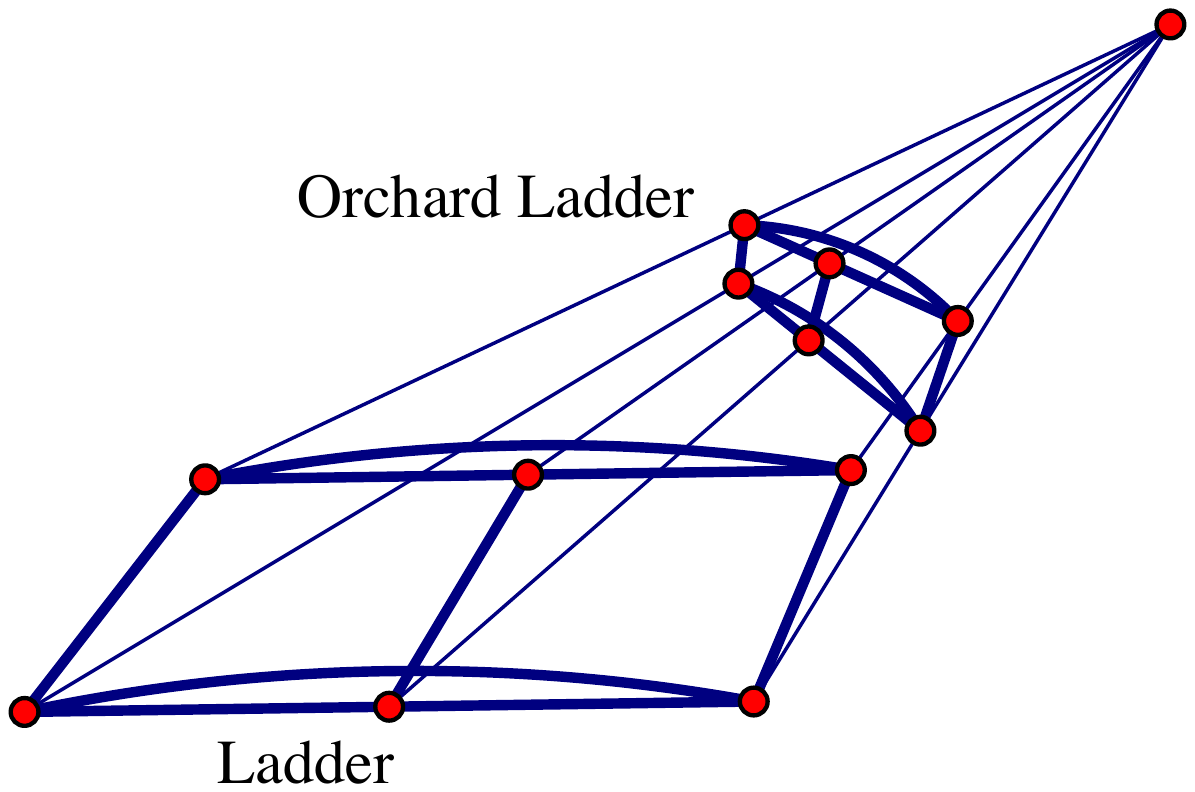}
        \end{center}
        \caption{ }\label{fig:ladders}
    \end{figure}
    
But when we specialize to complete bipartite graphs, examples such as
with Figure \ref{fig:ladders} can be ruled out, and we note
the following corollary (which we will not need elsewhere in this paper).
 
\begin{corollary}\label{corollary:cone} In Proposition \ref{prop:cone}, if we assume in addition that the graph $G=K(n,m)$ is a complete bipartite graph, each of $\p$ and $\q$ span $\R^d$, and \\
$\p_0*(K(n,m),(\p,\q))$ is universally rigid, then $(K(n,m),(\p,\q))$
  is universally rigid.
\end{corollary}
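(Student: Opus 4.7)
The plan is to pipeline three facts that are already in the paper: universal rigidity implies dimensional rigidity, dimensional rigidity transfers down from the cone by Proposition~\ref{prop:cone}, and for a complete bipartite framework whose two partitions each span $\R^d$, dimensional rigidity upgrades to universal rigidity by Lemma~\ref{lemma:proper-affine}.

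More concretely, I would proceed as follows. First, observe that universal rigidity is a stronger condition than dimensional rigidity: any alternative framework with the same edge lengths in some $\R^D$ must be congruent to the original, and in particular its affine span must have the same dimension as the original. Thus, from the hypothesis that $\p_0*(K(n,m),(\p,\q))$ is universally rigid, we immediately get that it is dimensionally rigid. Second, apply the ``only if'' direction of Proposition~\ref{prop:cone} (the dimensional rigidity equivalence, which is the part of that proposition that runs in both directions) to conclude that $(K(n,m),(\p,\q))$ itself is dimensionally rigid in $\R^d$. Finally, the standing hypothesis is that $\p$ and $\q$ each span $\R^d$, so Lemma~\ref{lemma:proper-affine} applies and tells us that a dimensionally rigid bipartite framework whose two partitions each affinely span $\R^d$ is automatically universally rigid (the lemma's proof handles the conic-at-infinity condition, which is condition~(iii) of Theorem~\ref{thm:fundamental}, by showing that an edge-direction conic would force one of the partitions into a proper affine subspace).

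There isn't really a hard step here; the main thing to be careful about is why the general pathology of cones (the ladder/orchard-ladder example) does not apply. That pathology exists because for general graphs, being on a conic at infinity is not projectively invariant, so a non-universally-rigid framework can become universally rigid after coning. The reason our argument avoids this is exactly the hypothesis that each of $\p, \q$ spans $\R^d$: this spanning hypothesis rules out the conic-at-infinity obstruction via Lemma~\ref{lemma:proper-affine}, which is specific to the complete bipartite setting. So the content of the corollary is really that ``cone down preserves dimensional rigidity'' (Proposition~\ref{prop:cone}) plus ``for spanning complete bipartite frameworks, dimensional rigidity and universal rigidity coincide'' (Lemma~\ref{lemma:proper-affine}).
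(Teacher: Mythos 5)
Your proof is correct and follows exactly the paper's own argument: pass from universal rigidity of the cone to dimensional rigidity, apply Proposition~\ref{prop:cone} to bring dimensional rigidity down to $(K(n,m),(\p,\q))$, and then invoke the second clause of Lemma~\ref{lemma:proper-affine} (using that $\p$ and $\q$ each span $\R^d$) to upgrade to universal rigidity. Your closing remark about why the ladder pathology is avoided matches the paper's own discussion preceding the corollary.
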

\begin{proof}  By  Proposition \ref{prop:cone} since $\p_0*(K(n,m),(\p,\q))$ is universally rigid, 
$(K(n,m),(\p,\q))$ is dimensionally rigid, and Lemma
  \ref{lemma:proper-affine} implies that $(K(n,m),(\p,\q))$ is
  universally rigid.  $\Box$
\end{proof}

\subsection{Projections and Cross-sections}\label{subsection:sections}

Suppose that 
$\tilde{{V}} \subset {V}$
is a subset of the vertices of a graph $G$, which induces a subgraph
$\tilde{G}$
where the edges 
$E(\tilde{G}) \subset E(G)$.
$\tilde{{V}}$ 
thus induces 
$\tilde{\p}$, a subconfiguration of the points
$\p$.
This gives us 
$(\tilde{G}, \tilde{\p})$, a subframework of $(G, \p)$.
\begin{lemma}\label{lemma:projection} 
Suppose that $(\tilde{G}, \tilde{\p})$ is a universally rigid
subframework of $(G, \p)$ in $\R^d$, 
where 
the dimension of the affine span of
$\p$ is $d$, 
and
the dimension of the affine span of
$\tilde{\p}$ is $\tilde{d} < d$.
Suppose
further that for each vertex not in $\tilde{{V}}$,
the dimension of the affine
span of its 
neighbors in $\tilde{\p}$ is $\tilde{d}$-dimensional.  
Let
$\pi : \R^d \rightarrow \R^{d-\tilde{d}}$ be the 
orthogonal projection
that projects all the points
of $\tilde{\p}$ to a single point, say $\p_0$.

Then $(G, \p)$ is universally rigid (respectively dimensionally rigid)
if and only if $\p_0*(G,\pi(\p))$ is universally rigid (respectively
dimensionally rigid).
\end{lemma}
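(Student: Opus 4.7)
The plan is to build an explicit correspondence between realizations of $(G,\p)$ and realizations of the coned framework $\p_0*(G,\pi(\p))$ that preserves edge-length constraints and offsets the affine-span dimension by exactly $\tilde{d}$. Once this correspondence is in place, both the universal rigidity and dimensional rigidity equivalences follow immediately from the same bookkeeping.

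To construct the correspondence, I would exploit the two hypotheses in tandem. Since $(\tilde{G},\tilde{\p})$ is universally rigid, in any isometric realization $\p''$ of $(G,\p)$ (in any ambient $\R^D$) the subconfiguration corresponding to $\tilde{\p}$ sits congruently inside some $\tilde{d}$-dimensional affine subspace $L''\subset \R^D$. For each vertex $v\notin\tilde{V}$, its $\tilde{V}$-neighbors affinely span $L$, so their images under the congruence span $L''$, and the given edge lengths $\|\p_v-\p_k\|$ determine, via trilateration, a unique foot $f(v)\in L''$ for the perpendicular from $\p_v''$. Thus $\p_v''=f(v)+r_v''$ with $r_v''\in (L'')^\perp$, where $f(v)$ matches the original foot of $\p_v$ onto $L$ under the congruence. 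Declaring $\p_0''$ to be the origin of $(L'')^\perp$ and $r_v''$ to be the image of $v$ yields a realization of $\p_0*(G,\pi(\p))$ inside $(L'')^\perp$. Conversely, given any realization of $\p_0*(G,\pi(\p))$ in $\R^{D'}$, one places $\tilde{\p}$ rigidly inside a fixed $L\subset \R^{\tilde{d}+D'}$, trilaterates each foot $f(v)\in L$, and sets $\p_v'''=f(v)+r_v''$ inside $L\oplus L^\perp$.

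The verification that edge lengths transfer correctly is a short Pythagorean computation. For an edge $\{v,k\}$ with $k\in\tilde{V}$, one has $\|\p_v''-\p_k''\|^2 = \|f(v)-\p_k''\|^2+\|r_v''\|^2$, which combined with the corresponding identity for the original configuration forces $\|r_v''\|=\|\pi(\p_v)\|$, precisely the cone-edge length in $\p_0*(G,\pi(\p))$. For an edge $\{v,w\}$ with both $v,w\notin\tilde{V}$, the analogous decomposition gives $\|r_v''-r_w''\|^2 = \|\pi(\p_v)-\pi(\p_w)\|^2$, matching the edge length of $\{v,w\}$ in the coned framework. The affine span of $\p''$ equals $L''$ plus the linear span of the $r_v''$, so its dimension is exactly $\tilde{d}$ more than the affine span of the coned realization. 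Consequently the correspondence sends realizations of $(G,\p)$ in $\R^D$ bijectively (modulo ambient rigid motions) to realizations of $\p_0*(G,\pi(\p))$ in $\R^{D-\tilde{d}}$, congruence on one side is equivalent to congruence on the other, and affine-span dimensions shift by $\tilde{d}$. Universal and dimensional rigidity therefore transfer in both directions.

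The main technical subtlety is ensuring that the foot $f(v)$ is well-defined in every realization: the trilateration step requires that the $\tilde{V}$-neighbors of each $v\notin\tilde{V}$ affinely span the full $\tilde{d}$-dimensional $L$, which is precisely the spanning hypothesis in the statement. Without it, the foot of $\p_v''$ in $L''$ would carry nontrivial freedom, and the perpendicular part $r_v''$ alone would no longer record enough information to reconstruct the full realization, breaking the bijection. Beyond this, everything is a careful tracking of Pythagorean decompositions and of how the congruence of $\tilde{\p}''$ propagates to the feet $f(v)$ across the two frameworks.
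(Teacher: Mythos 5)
Your proof is correct and takes essentially the same approach as the paper's: decompose $\R^d = \R^{\tilde{d}} \times \R^{d-\tilde{d}}$, use the universal rigidity of $(\tilde{G},\tilde{\p})$ together with the affine-spanning hypothesis to trilaterate each vertex's foot on $\R^{\tilde{d}}$ and its perpendicular distance, and transfer the remaining edge lengths via the Pythagorean identity. You merely spell out in full the realization-by-realization correspondence that the paper compresses into ``Then the conclusion follows.''
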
  
\begin{proof}  
We are regarding $\R^d=\R^{\tilde{d}} \times \R^{d-\tilde{d}}$ such that 
$\tilde{\p}
\subset \R^{\tilde{d}}$.  Since, for $\p_i$ corresponding to a vertex of 
${V}-\tilde{{V}}$,
the dimension
of the affine span of its neighbors  in $\tilde{\p}$ is
$\tilde{d}$-dimensional, then
the distance from such $\p_i$ to
$\R^{\tilde{d}}$ is constant for any equivalent realization of $(G, \p)$
fixing $\tilde{\p}$ and is equal to $|\p_0-\pi(\p_i)|$. Additionally with
$\tilde{\p}$ fixed, $\tilde{\pi}(\p_i)$ is fixed as well, where 
$\tilde{\pi} : \R^d
\rightarrow \R^{\tilde{d}}$ is the orthogonal
projection onto $\R^{\tilde{d}}$.  Similarly, for
$\{i,j\}$ an edge of 
$G-\tilde{G}$, $|\pi(\p_i)-\pi(\p_j)|^2 +
|\tilde{\pi}(\p_i)-\tilde{\pi}(\p_j)|^2= |\p_i-\p_j|^2$.  Then the conclusion
follows.  $\Box$
\end{proof}

Figure \ref{fig:projection} 
shows an example of a universally rigid $K(4,4)$ in $\R^3$, using Lemma \ref{lemma:projection} applied  to $K(2,2)$ on a line in $\R^3$, then Lemma \ref{lemma:slide} and  Corollary~\ref{corollary:cone} applied to another $K(2,2)$ on a line, this time in $\R^2$. 
\begin{figure}[here]
    \begin{center}
        \includegraphics[width=0.5\textwidth]{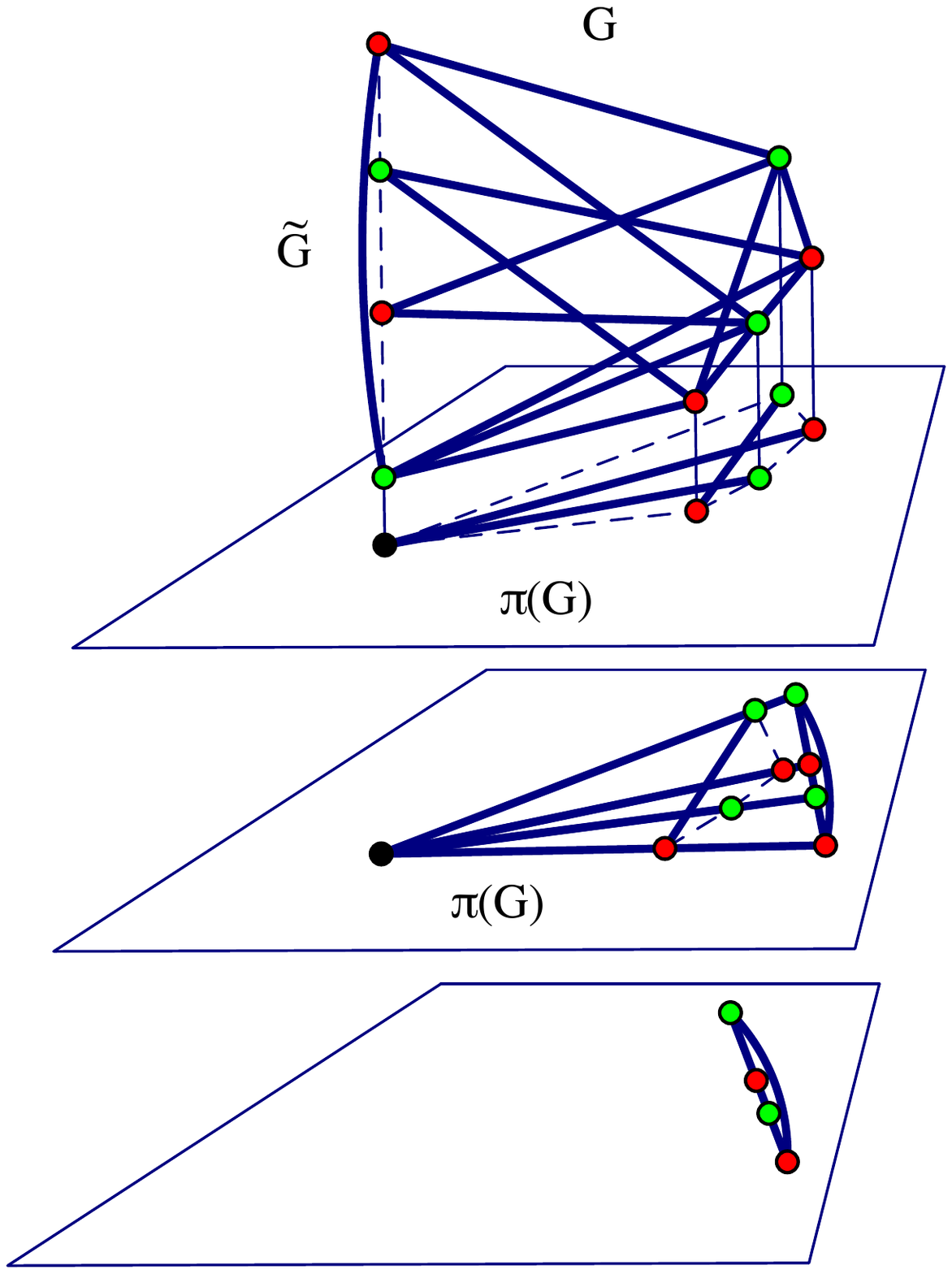}
        \end{center}
        \caption{ }\label{fig:projection}
    \end{figure}

\section{Algorithm}
\label{section:algorithm}

We can completely test dimensional and universal
rigidity of any complete bipartite framework
with an efficient algorithm which we describe now. 
We will assume that the input coordinates $(\p,\q)$ in $\R^d$ are 
given as rational numbers
that can be described with $L$ bits. 
Without loss of generality, we will assume that the
affine span of $(\p,\q)$ is $d$-dimensional.

Though it is true that one can also attempt
to numerically gain evidence to 
answer this question using semidefinite programming~\cite{Ye-So-sensor},
the lack of complexity results for SDP feasibility~\cite{ramana}
makes that approach theoretically less satisfying.

At the heart of our algorithm is a routine that 
looks for a solution to the following set of conditions
over the variables $\lambda_i, \mu_j$:
\ba
\sum_{i=1}^n \lambda_i \hat{\p}_i \hat{\p}_i^t
&=& 
\sum_{i=1}^m \mu_j \hat{\q}_j \hat{\q}_j^t \\
\sum_{i=1}^n \lambda_i &=& 1 \\
\lambda_i &\geq& 0 \\
\mu_j &\geq& 0 
\ea
The second condition rules out the all-zero solution.
This is a linear programming feasibility problem that can be 
exactly solved
in worst case time that is polynomial in $(n+m,L)$.

Let us, for now, assume that 
$m+n \geq {\rm dimSpan}(\p,\q)+2$.
If there is no feasible solution, then from
Theorem~\ref{thm:main}, the graph must be 
not dimensionally rigid.
On the other hand
if we find a feasible solution and \emph{all} of the
$\lambda_i$ and $\mu_j$  have positive values, then we 
know that there is a
maximum rank PSD equilibrium stress matrix on the complete bipartite
framework and
from Theorem~\ref{thm:SVD},
our framework must
be super stable, and thus universally rigid.

Suppose though,
we find a feasible solution, where some, but not all of the 
$\lambda_i$ and $\mu_j$ have positive values.
We can easily determine which
$\lambda_i$ and $\mu_j$  have positive values, and we 
will know that there is a
maximum rank PSD equilibrium stress matrix on the complete bipartite
subframework over the associated $\p_i$ and $\q_j$. 
From Theorem~\ref{thm:SVD},
this subframework must
be super stable and universally rigid.
But what can we say about the complete input framework?

For example, Figure \ref{fig:flexible}
shows a $2$-dimensional framework, 
where our linear program will find a certifying PSD stress for the
collinear $K(2,2)$ subframework.
But the vertices that are not on this common line
are  free to flex continuously in three dimensions.

\begin{figure}[here]
    \begin{center}
        \includegraphics[width=0.3\textwidth]{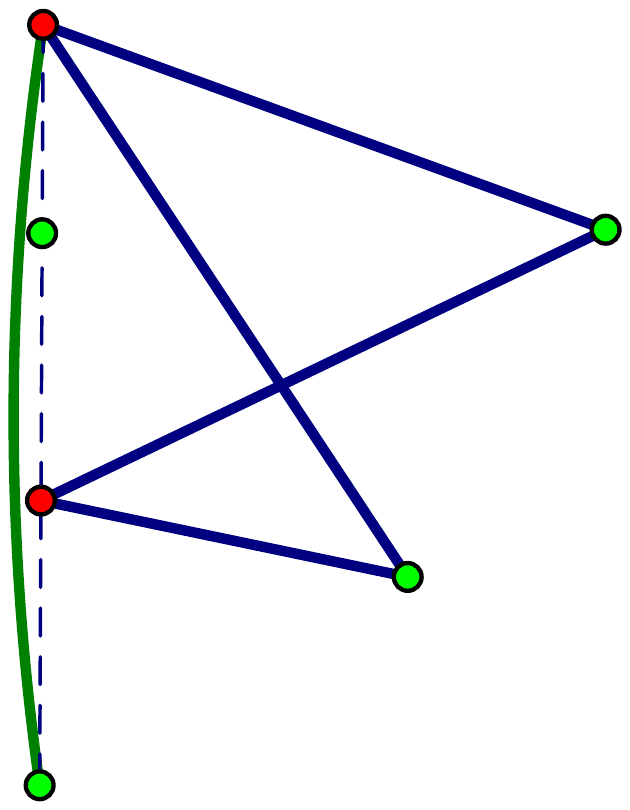}
        \end{center}
        \caption{ }\label{fig:flexible}
    \end{figure}

The idea of our algorithm is to proceed by recording the 
indices of our ``already known to be universally rigid subframework'',
and then to apply ideas from Section
\ref{section:coning-projection}, to reduce our problem down to 
a smaller problem.
Roughly speaking, we will 
project the known universally subframework down to a cone point,
slide the remaining points into a common hyperplane, and 
finally  remove
the cone point
(see e.g. Figure \ref{fig:projection}).
We can we can then apply our linear programming approach to
this smaller problem. Infeasibility of the smaller problem will
imply that the smaller problem is 
not dimensionally rigid and so too is
the
original framework,
and we can exit.
A feasible solution for the smaller problem
will allow us to add even more vertices 
to the known universally rigid set.
We iterate this process  until we either exit due to infeasiblity,  
or we account for all of the
vertices and conclude that our input framework is universally rigid, 
or we end up with a smaller problem where the number of vertices is 
exactly one more than the dimension of their affine span, in which case
the smaller problem, as well as the original framework is 
dimensionally but not universally rigid.

We note that after the first stage, 
due to the geometric projection and sliding operations
(both which can be determined using a linear system)
the input to our linear program
may require polynomially more bits than our original input size, $L$.
But in each stage of the iteration, we perform these geometric
operations anew starting with the input $(\p,\q)$ data, so 
this avoids any cascading blowup in bit complexity.

\subsection{Details}

In this algorithm, let  $\tilde{{V}} \subset {V}$
be an index set recording 
the vertices of 
some complete bipartite subframework of $(K(n,m),\p,\q)$ in $\R^d$, 
which has 
already been determined to be 
universally rigid. We will refer to this subframework as the
``known-UR set''.
The known-UR set begins as empty.
During the algorithm, the known-UR set  will also maintain the 
``invariant'' property that
the affine span of its $\p$-subset agrees with that of its
$\q$-subset.

The complement of the known-UR set is denoted as
${{V}'}$, describing a complete bipartite subgraph
$K(n',m')$.
Suppose
the complement is empty, then the known-UR set is the entire 
$(\p,\q)$, and thus $(K(n,m),\p,\q)$ is universally rigid. The same is true
(due to the invariant) if  the complement consists of 
a single $\p$, or a single $\q$, or one $\p$ and one $\q$
(which 
must be connected by an edge).

Our algorithm is the following:
\\ \\
\noindent
RigidityTest($\p,\q)$ \\
\hspace*{.4 in}
$\tilde{{V}}$ := $\{\}$ \\
\hspace*{.4 in}
repeat \\
\hspace*{.6 in}
${{V}'}$ 
:= ${V} - \tilde{{V}}$ \\
\hspace*{.6 in}
if (
$\#({{V}'}_{\p}) 
\leq 1$ 
and 
$\#({{V}'}_{\q}) 
\leq 1$)
\hspace*{.4 in}
  output ``universally rigid''\\
\hspace*{.6 in}
$(\p_0,\p',\q')$ := $\pi_{\tilde{{V}}} $($\p,\q$) \\
\hspace*{.6 in}
$(\p'',\q'')$ := $\sigma_{\p_0}$($\p',\q'$) \\
\hspace*{.6 in}
if (dimspan($\p'',\q''$) = ($\#({{V}'} )-1$))
\hspace*{.25 in}  output ``dimensionally rigid''\\
\hspace*{.6 in}
${{S}}$
:= findSuperStableSubframework($\p'',\q'',{{V}'}$) \\
\hspace*{.6 in}
if ($\#({{S}}) = 0)$
\hspace*{1.42 in}
output ``not dimensionally rigid''\\
\hspace*{.6 in}
$\tilde{{V}}$
:= 
$\tilde{{V}} \cup {{S}}$ \\
\hspace*{.6 in}
$\tilde{{V}}$
:= affineClosure$_{\p,\q}$($\tilde{{V}}$)
\\

Let us denote the dimension of  the known-UR set as
$\tilde{d}$. 
The function
$\pi_{\tilde{{V}}}(\p,\q)$ 
performs the orthogonal projection
on  the points $(\p,\q)$ 
such that  the vertices of the known-UR set project to a single
point in $\R^{d-\tilde{d}}$. 
We denote this single point as $\p_0$, and the projection of the complementary
vertices as $(\p',\q')$.
We can think of this result as describing 
a framework 
$\p_0*(K(n',m'),\p',\q')$
of a cone over the complementary complete bipartite graph
in $\R^{d-\tilde{d}}$. 
(See Figure \ref{fig:projection}, top). 

The function
$\sigma_{\p_0}(\p',\q')$ 
 slides  the points in $\p'$ and $\q'$  
along their rays from  the cone point $\p_0$
such 
that they all lie in a hyperplane that does not include the cone vertex. 
(See Figure \ref{fig:projection}, middle). 
We denote the
resulting points 
as $(\p'',\q'')$.
By discarding the  cone point, 
we can think of this result as describing 
a framework 
$(K(n',m'),\p'',\q'')$
of the complementary complete bipartite graph
in $\R^{d-\tilde{d}-1}$. 
(See Figure \ref{fig:projection}, bottom).

Suppose 
that 
$(\p'',\q'')$
 has an affine span of
maximal dimension, one less then the total number of its vertices.
This, and the fact that the complementary  graph is not a simplex,
makes $(K(n',m'),\p'',\q'')$
 dimensionally but not universally rigid.
(This follows from simply counting the number of degrees of freedom
vs. constraints, 
the fact that the framework cannot have
any non-zero equilibrium stress, and an application of the 
main results of~\cite{Asimow-Roth-I}.)

Likewise $\p_0*(K(n',m'),\p'',\q'')$, being coned in one higher
dimension, is also of maximal dimension and not a simplex,
making it
dimensionally but not universally
rigid. 
Since  $(\p',\q')$ is obtained from 
$(\p'',\q'')$ using sliding through $\p_0$, then by 
Lemma \ref{lemma:slide}, $\p_0*(K(n',m'),\p',\q')$
too 
is dimensionally but not universally rigid. 
By Lemma \ref{lemma:projection},  $(K(n,m),\p,\q)$ is 
dimensionally but not universally rigid. 
Thus we output ``dimensionally rigid''.

The next step,
findSuperStableSubframework,
is the heart of the algorithm.
Here we find a subframework 
of 
$(K(n',m'),\p'',\q'')$ 
such that Equation (\ref{eqn:Radon1}) holds with strictly positive coefficients.
As described above,
this can be found by setting up an
a linear programming feasibility problem.
The output of this step is simply the indices of the vertices,
${S} \subset {{V}'}$
comprising this super stable subframework. 

If ${S}$  is 
empty, then from Theorem \ref{thm:Alfakih-stress} 
$(K(n',m'),\p'',\q'')$ is not dimensionally rigid. 
Then by 
Proposition \ref{prop:cone}, 
so too is 
$\p_0*(K(n',m'),\p'',\q'')$, 
then by 
Lemma \ref{lemma:slide}, 
so too is 
$\p_0*(K(n',m'),\p',\q')$, 
then by 
Lemma \ref{lemma:projection}, 
so too is 
$(K(n,m),\p,\q)$.
Thus we output ``not dimensionally rigid''.

If ${S}$  is not
empty, 
then from Theorem \ref{thm:SVD}, 
the subframwork of \\
$(K(n',m'),\p'',\q'')$ induced by 
the vertices in ${S}$
is universally rigid. As before, so is the induced subframeworks of 
$\p_0*(K(n',m'),\p'',\q'')$ and\\ 
$\p_0*(K(n',m'),\p',\q')$.  Finally, by Lemma \ref{lemma:projection}, and our invariant,
so is 
the subframework of $(K(n,m),\p,\q)$ 
induced by 
$\tilde{{V}} \cup {{S}}$.

Likewise, from Theorem \ref{thm:SVD}, 
the affine span  of the ${S}$-induced subset of 
$\p''$ agrees with that of 
the ${S}$-induced subset of 
$\q''$.
Thus the invariant
is also true for 
the subset of $(\p,\q)$ 
induced by 
$\tilde{{V}} \cup {{S}}$.
Thus we now include these vertices in our updated
known-UR set.

Finally using a trilateration argument, we can also add to 
$\tilde{{V}}$ any other the vertices that are in its affine span.

Each iteration in this algorithm always makes progress so it
must terminate after at most $n+m$ steps.

In summary:
\begin{theorem}
Given a complete bipartite framework with rational coordinates.
There is a (weakly) polynomial time algorithm that determines whether 
or not the
framework is dimensionally rigid, and whether or not it is 
universally rigid.
\end{theorem}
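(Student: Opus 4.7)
The plan is to verify that the algorithm \textsc{RigidityTest} just described is both correct and runs in polynomial time in $(n+m, L)$; the theorem then follows immediately, so no new construction is required --- the work is entirely in establishing a loop invariant, checking the correctness of each branch, and controlling the bit-complexity of the intermediate quantities.

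First I would state the loop invariant precisely: at the start of each iteration, the index set $\tilde{V}$ records a subframework of $(K(n,m),\p,\q)$ that has already been certified universally rigid, and moreover the affine span of its $\p$-vertices coincides with the affine span of its $\q$-vertices. This is trivially true at initialization ($\tilde{V}=\emptyset$), and the task is to show it persists through each iteration and drives the algorithm to terminate correctly.

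Next I would walk through the branches in order. The termination branch, reached when $\#({V'}_\p) \leq 1$ and $\#({V'}_\q) \leq 1$, uses the invariant together with a trilateration argument to pin every remaining vertex to the known-UR set. The ``dimensionally rigid'' branch, reached when $(\p'',\q'')$ has affine span of dimension exactly $\#({V'}) - 1$, combines a degree-of-freedom count based on \cite{Asimow-Roth-I} with the observation that the complementary bipartite graph is not a simplex to conclude $(K(n',m'),\p'',\q'')$ is dimensionally but not universally rigid; Lemma \ref{lemma:slide}, Proposition \ref{prop:cone}, and Lemma \ref{lemma:projection} then transport this conclusion back to the original $(K(n,m),\p,\q)$. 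The ``not dimensionally rigid'' branch uses LP infeasibility together with Theorem \ref{thm:Alfakih-stress} (via Proposition \ref{prop:brn} and Proposition \ref{prop:separation}) and Theorem \ref{thm:main} to rule out dimensional rigidity of the reduced framework, again propagated back by the same coning/projection lemmas. Finally, the update step invokes Theorem \ref{thm:SVD} to certify that the $S$-induced subframework is super stable, and to confirm that the affine-span invariant is preserved after adjoining $S$ and its affine closure to $\tilde{V}$.

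The main obstacle will be controlling bit-complexity. A naive recursion in which the output of $\pi_{\tilde{V}}$ and $\sigma_{\p_0}$ is fed back in as the next iteration's input could produce a tower of denominators growing geometrically with each pass, which would be fatal. The crucial observation, already flagged in the algorithm description, is that at each iteration these geometric maps are recomputed from the original rational data $(\p,\q)$ together with the current index set $\tilde{V}$, not from the previous iteration's transformed output; hence the entries of every LP instance remain rationals of bit-size polynomial in $L$. Combined with the known polynomial bit-complexity of LP feasibility (via Khachiyan's ellipsoid method or interior-point methods), together with the obvious fact that $\#(\tilde{V})$ strictly increases on every pass that does not exit so that the outer loop runs at most $n+m$ times, the total running time is polynomial in $(n+m,L)$, completing the proof.
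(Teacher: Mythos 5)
Your proposal is correct and follows essentially the same route as the paper: the theorem is proved there precisely by verifying the \textsc{RigidityTest} loop---the affine-span invariant on the known-UR set, the branch-by-branch correctness via Lemma \ref{lemma:slide}, Proposition \ref{prop:cone}, Lemma \ref{lemma:projection}, Theorems \ref{thm:SVD}, \ref{thm:Alfakih-stress} and \ref{thm:main}, termination in at most $n+m$ iterations, and the bit-complexity control obtained by recomputing the projection and sliding maps from the original rational input at every pass. No substantive difference or gap to report.
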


Running this algorithm on the example of Figure \ref{fig:projection},
will conclude in two iterations, that the framework is universally rigid.
For the example of Figure \ref{fig:flexible}, in the second iteration, 
${{V}'}$ will consist of two green vertices in 
$\R^0$, making ${S}$ empty, and $(\p,\q)$ 
not dimensionally rigid.

\section{Tensegrities and Further Work}  

An important consequence of our approach in this paper is that quite often we can replace the distance equality constraints with inequality constraints as described in Section \ref{section:introduction}.   Each edge of the underlying graph $G$ is designated as a cable, strut or bar depending on whether it is constrained not to increase, not to decrease or not to change length, respectively.   In \cite{Connelly-Gortler-iterated} we have shown that, in many cases, even though the given framework may not support an equilibrium stress that is non-zero for a given edge, it may still be possible to declare a given edge a cable or strut and maintain universal rigidity.  Even in the case when the graph is not bipartite, Proposition \ref{prop:cone} can apply as in Figure \ref{fig:slide}, where due attention should be applied to the signs  of the stresses.  We do not pursue that extension of the results here, though. 

Another application of our approach here is in the local rigidity theory of prestress stability as shown in \cite{connelly-whiteley}.  There, even if the stress matrix is not PSD, it can still be useful determine local rigidity, especially when the whole framework is not infinitesimally rigid.  

Another point is that the stress-energy function determined by the stress and stress matrix provides a measure of how far a given configuration is from an ideal configuration, globally.  So if a configuration has some determined edge measurements, the stress-energy function gives an upper bound on how close any configuration is with those edge lengths.  Indeed, with the tensegrity constraints it can be possible to eliminate certain edge lengths as feasible.  For example, for six points $\p_1, \dots, \p_6$, there is no configuration where $|\p_i -\p_{i+1}| \le 1$, and $|\p_i -\p_{i+3}| > 2$, all taken modulo $6$.  This is shown using the configuration $K(3,3)$ on a circle as in Figure \ref{fig:K33}.

\section{Acknowledgements}

The impetus for this paper is the result in \cite{Jordan-universal-line} for 
$K(n,m)$ on a line.  It was a desire to generalize that result, which was the starting point for this paper. 

The elephant in the room is the paper by E.  Bolker and B. Roth
\cite{Bolker-Roth}.  This paper was constantly in the background
leading us to what was true and what was not.  It gives a reasonably
complete picture of which configurations of complete bipartite graphs
are infinitesimally rigid.  Also, one can see stress matrices there
quite naturally.  Their basic tool was the tensor product of a vector
with itself, where instead we think of it as using the Veronese
map. 

Other work we did not formally use, but is still lurking in the background, is the very insightful paper \cite{Whiteley-bipartite} by W. Whiteley.  The idea there is that an infinitesimal  flex $\p'$ of a bipartite framework with corresponding configuration $\p$ on a quadric can be easily described.  Furthermore, the two configuration $\p +\p'$ and  $\p -\p'$ describe equivalent frameworks. Thus they are not even globally rigid, and they are separated by a quadric surface.  This is the basis in \cite{Connelly-K33} to show that $K(5,5)$ is not globally rigid (thus not universally rigid) in $\R^3$.  But on the other hand, there are many examples of complete bipartite graphs in any   $\R^d$ that are globally rigid, but not universally, as we have shown here.

The main result of \cite{Gortler-Thurston2} applied to complete bipartite graphs, shows that when the configuration is generic, the rank and positive semi-definiteness of the stress matrix determines when the configuration is universally rigid.  What  we have done here, for complete bipartite graphs, is to replace the condition of being generic, which is problematic to determine in general, with the more precise condition of being in quadric general position in Corollary \ref{cor:quadric-general-position}.

We would like to thank Deborah Alves whose experiments kept on suggesting the correctness of Theorem \ref{thm:SVD}, long before we knew how to prove it.

\bibliographystyle{plain}
\bibliography{framework}

\end{document}